\newtheorem{lemma}{Lemma}[section]
\newtheorem{theorem}{Theorem}[section]
\newtheorem{proposition}{Proposition}[section]
\numberwithin{equation}{section} \numberwithin{theorem}{section}
\numberwithin{example}{section} \numberwithin{remark}{section}
\numberwithin{figure}{section} \numberwithin{algorithm}{section}
\def\ep{\varepsilon}
\def\ba{\begin{array}}
\def\ea{\end{array}}
\def\bma{\left(\begin{matrix}}
\def\ema{\end{matrix}\right)}
\def\be{\begin{equation}}
\def\ee{\end{equation}}
\def\vu{{\bf u}}
\def\vU{{\bf U}}
\def\vV{{\bf V}}
\def\vW{{\bf W}}
\def\vE{{\bf E}}
\def\vUd{\tilde{\vU}}
\def\nc{\nabla\!\!\cdot\!}
\def\cn{\!\cdot\!\!\nabla}
\def\pa{\partial}
\def\tpa{{\partial}}
\def\pt{\partial_t}
\def\po{\partial_0}
\def\ptt{\partial_{tt}}
\def\px{\partial_1}
\def\py{\partial_2}
\def\dfrac{\displaystyle\frac}
\def\half{\dfrac{1}{2}}
\def\cL{{\mathcal L}}
\def\cA{{\mathcal A}}
\def\mR{\mbox{R}}
\def\veps{\varepsilon}
\def\Ga{\Gamma^{\alpha}}
\def\Gb{\Gamma^{\beta}}
\def\Gg{\Gamma^{\gamma}}
\def\bega{{\beta\gamma}}
\def\LGN{|}
\def\RGN#1{|_{#1}}
\def\LGNN{\|}
\def\RGNN#1{\|_{H^{#1}_{\Gamma}}}
\def\gaE{F}
\def\Hw{H}
\def\lang{\left\langle}
\def\rang{\right\rangle}
\def\rhoK{\rho^K}
\def\vuK{{\bf u}^K}
\def\vS{{\bf S}}
\begin{document}

\title[Rotating Shallow Water System]{On the Classical  Solutions
of Two Dimensional Inviscid  Rotating Shallow Water System}

\author[Bin Cheng]{Bin Cheng}
\author[Chunjing Xie]{Chunjing  Xie}
\address{\newline
        Department of Mathematics,
	University of Michigan\newline
	530 Church St.
	Ann Arbor, MI 48109 USA}
\email[Bin Cheng]{bincheng@umich.edu}
\email[Chunjing Xie]{cjxie@umich.edu}

\date\today
\keywords{Rotating Shallow Water system; Klein-Gordon equations; classical solutions; global existence; symmetric system of hyperbolic PDEs.}
\subjclass{35L45 (Primary) 76N10, 35L60 (Secondary)}
\begin{abstract} 
We prove global existence  and asymptotic
behavior of classical solutions for two dimensional inviscid
Rotating Shallow Water system with small initial data subject to the
zero-relative-vorticity constraint. One of the key steps is a
reformulation of the problem into a symmetric quasilinear
Klein-Gordon system, for which the global existence of classical solutions is
then proved with combination of the vector field approach and the
normal forms, adapting ideas developed in \cite{OzawaSL}. We also probe the case of general initial
data and reveal a lower bound for the lifespan that is almost inversely
proportional to the size of the initial relative vorticity.\end{abstract}

\maketitle

\section{Introduction and Main Results}

The system of Rotating Shallow Water (RSW) equations is a widely
adopted 2D approximation of the 3D incompressible Euler equations
and the Boussinesque equations in the regime of large scale
geophysical fluid motion (\cite{Pedlosky}). It is also regarded as
an important extension of the compressible Euler equations with
additional rotational forcing.

Start with the following formulation,
\begin{align}
\label{RSWh}\pt h+\nc(h\vu)&=0,\\
\label{RSWu} \pt\vu+\vu\cn\vu+\nabla h+\vu^\perp&=0,
\end{align}
where $h=h(t,x_1,x_2)$  and $ \vu=(u_1(t,x_1,x_2),u_2(t,x_1,x_2))^T$ denote the total height and
velocity of the fluids, respectively, and $\vu^\perp:=(-u_2, u_1)^T$
corresponds to the rotational force. For mathematical convenience,
all physical parameters are scaled to the unit (cf. \cite{Majda} for
detailed discussion on scaling).

Since $(h, \vu)=(1,0)$ is a steady-state solution of  (\ref{RSWh}), (\ref{RSWu}), we introduce the perturbations
$(\rho,\vu):=(h-1,\vu)$ and arrive at
\begin{align}
\label{RSWrho}\pt\rho+\nc(\rho\vu)+\nc\vu&=0,\\
\label{RSWurho} \pt\vu+\vu\cn\vu+\nabla\rho+ \vu^\perp&=0,
\end{align}
subject to initial data
\be\label{RSWinit}\quad\rho(0,\cdot)=\rho_0,\;\vu(0,\cdot)=\vu_0.\ee

An important feature of the RSW system is that the relative
vorticity \(\theta:=\nabla\times\vu-\rho=(\px u_2-\py u_1)-\rho\) is
convected by $\vu$, \be\label{theta}\pt\theta+\nc(\theta\vu)=0.\ee
Indeed, $\nabla\times$(\ref{RSWurho})$-$(\ref{RSWrho}) readily leads
to (\ref{theta}). The linearity of (\ref{theta}) then suggests that
$\theta\equiv0$ be an invariant with respect to time (as long as
$\vu\in C^1$), i.e. \be\label{inv}
\theta_0\equiv0\iff\theta(t,\cdot)\equiv0\iff\nabla\times\vu\equiv\rho.
\ee

Before stating the main theorems, we fix some notations. For $1\leq
p\leq \infty$, let $L^p$ denote the standard $L^p$ space on $\mR^2$.
For $l \geq 0$ and $s\geq 0$, define the weighted Sobolev norm
associated with the space $\Hw^{l,s}$ as
\begin{align}\label{def:weighted}
\|v\|_{\Hw^{l,s}}:=\|(1+|x|^2)^{s/2}(1-\Delta)^{l/2} v\|_{L^2}.
\end{align}
Also, denote the standard Sobolev space $H^l:=\Hw^{l,0}$.

\begin{theorem}\label{mainTh}
Consider the RSW system (\ref{RSWrho}), (\ref{RSWurho}), (\ref{RSWinit}) with initial data $\vu_0=(u_{1,0}, u_{2,0})^T\in \Hw^{k+2,k}$ for $k\ge52$ and zero relative vorticity,
\[
\rho_0=\pa_1 u_{2,0}-\pa_2 u_{1,0}.
\]
Then, there exists a universal constant $\delta_0>0$ such that the
RSW system admits a unique
classical solution $(\rho, \vu)$ for all time, provided that the
initial data satisfy
\begin{equation*}\label{Isizeu}
\|\vu_0\|_{\Hw^{k+2,k}}=\delta<\delta_0.
\end{equation*}
Moreover, there exists a free solution $\vu^+(t,\cdot)$ such that  
\[\|\vu(t,\cdot)-\vu^+(t,\cdot)\|_{H^{k-15}}+\|\pa_t\vu(t,\cdot)-\pa_t\vu^+(t,\cdot)\|_{H^{k-16}}\leq C(1+t)^{-1},\]
where
$\vu^+(t,\cdot):=(\cos(1-\Delta)^{1/2}
t)\vu_0^++\left((1-\Delta)^{-1/2}\sin((1-\Delta)^{1/2}
t)\right)\vu_1^+$ for some $\vu_{0}^+\in H^{k-15}$ and $\vu_1^+\in H^{k-16}$.
\end{theorem}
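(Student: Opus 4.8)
The plan is to first reformulate the RSW system under the zero-relative-vorticity constraint into a symmetric quasilinear Klein-Gordon system, then establish global existence for that system by combining the vector field method with normal forms.
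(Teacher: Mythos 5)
Your plan coincides with the paper's own route: the paper proves Theorem \ref{mainTh} by combining Lemma \ref{symm} (symmetrization $m=2(\sqrt{1+\rho}-1)$ and reduction, under $\nabla\times\vu=\rho$, to a symmetric quasilinear Klein--Gordon system), Theorem \ref{KGsystem} (global existence via a continuity argument on the functional $X(t)$, using Georgiev-type decay estimates after a Shatah normal-form transform together with vector-field energy estimates), and Theorem \ref{asymTh}. As stated, however, your proposal only addresses the global existence half of the theorem. The conclusion also asserts convergence to a free solution $\vu^+$ at rate $(1+t)^{-1}$ in $H^{k-15}\times H^{k-16}$; in the paper this follows from the normal-form variable $\vV=\vU+\vW$ satisfying a Klein--Gordon equation with cubic source $\vS$ obeying $\|\vS(s,\cdot)\|_{H^{k-15}}\le C(1+s)^{-2}$, which makes the Duhamel integrals defining the scattering data $\vU_0^+,\vU_1^+$ convergent, plus the decay of the quadratic correction $\vW$ itself. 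You should include this step (or an equivalent scattering argument) for the proposal to cover the full statement.
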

The proof is a straightforward combination of Lemma \ref{symm}, Theorem \ref{KGsystem} and Theorem \ref{asymTh} below.

This result shows fundamentally different lifespan of the classical
solutions for the RSW system in comparison with the compressible
Euler systems. Note that the relative vorticity $\theta=\nabla\times\vu-\rho$ in the RSW
equations plays a very similar role as vorticity $\nabla\times\vu$ in the compressible
Euler equations. Correspondingly, RSW solutions with zero relative
vorticity $\theta$ is an analogue of irrotational solutions for the
compressible Euler equations. However, the life span for 2D
compressible Euler equations with zero vorticity was proved to be
bounded from below (Sideris \cite{Sideris:2D}) and above (Rammaha
\cite{Rammaha}) by $O({1}/{\delta^2})$. Here, $\delta$ indicates the
size of the initial data. Sideris also showed that the life span in
the 3D case is bounded from below by $O(e^{1/\delta})$ in
\cite{Sideris:3D} and from above by $O(e^{1/{\delta^2}})$ in
\cite{Sideris:3D:singularity}. Our result for 2D RSW system, on the other hand, is global in time due to the additional rotating
force. Consult \cite{Babin, LiTa:rotation, ChTa:SIAM} for related results on global and long-time existence of classical RSW solutions in various regimes.

A key ingredient of the proof is to treat the RSW system as a system
of quasilinear Klein-Gordon equations -- cf. Lemma \ref{symm} for a formal
discussion. Such reformulation allows us to utilize the fruitful
results on nonlinear Klein-Gordon equations appearing in recent decades. To
mention a few, for spatial dimensions $N\ge5$, Klainerman and
Ponce \cite{KlainermanP} and Shatah \cite{Shatahev} showed that the
Klein-Gordon equation admits a unique, global solution for small
initial data and that the solution approaches the free solution of
the linear Klein-Gordon equation as $t\rightarrow \infty$. The
proofs in \cite{KlainermanP, Shatahev} are based on $L^p-L^q$ decay
of the linear Klein-Gordon equations. The global existence for  quasilinear Klein-Gordon equations in dimensions $N=4,3$ was proved
independently by Klainerman in \cite{Klainerman} using the vector
fields approach and Shatah in \cite{Shatah} using the normal forms.
In the $N=2$ case, global existence of classical solutions become increasingly subtle due
to the $(1+t)^{-1}$ decay rate of solutions to linear Klein-Gordon equations. Nevertheless, it has been proved by Ozawa et al in
\cite{OzawaSL} for semilinear, scalar equations. The authors combined
the vector fields approach and the normal form method after partial
results in \cite{GeorgievP, Georgiev, Simon}. The result of global existence on
quasilinear, scalar Klein-Gordon equations was announced in
\cite{OzawaQL}. Recently, Delort et al obtained global existence for
a two dimensional system of two Klein-Gordon equations in
\cite{Fang}, where the authors transformed the problem using
hyperbolic coordinates and then studied it with the vector fields
approach, which was restricted to compactly supported initial data.
For applications of the Klein-Gordon equations in fluid equations,
we refer to \cite{Guo} by Y. Guo on global existence of three
dimensional Euler-Poisson system. Note that the irrotationality
condition used there plays a counterpart of the
zero-relative-vorticity constraint in our result.

For general initial data, we have the following theorem on the
lifespan of classical solutions. Its proof is given in Section 5.
\begin{theorem}\label{perTh}
Consider the RSW system (\ref{RSWrho}), (\ref{RSWurho}), (\ref{RSWinit}) with initial data $(\rho_0,\vu_0)\in \Hw^{k+1,k}$ for $k\ge52$. Let $\delta$ denote the size of the initial data \begin{equation*}
\delta=\|(\rho_0,\vu_0)\|_{\Hw^{k+1,k}},
\end{equation*}
 and $\veps$ the size of the initial relative vorticity,
 \[\veps=\|(\pa_1 u_{2,0}-\pa_2 u_{1,0})-\rho_0\|_{H^2}.\]
Then, there exists a universal constant $\delta_0>0$ such that, for any $\delta\le\delta_0$, the
RSW system admits a unique
classical solution $(\rho, \vu)$ for \be\label{span:general}t\in[0,
C_1\veps^{-\frac{1}{1+C_2\delta}}].\ee
Here, $C_1$ and $C_2$ are constants independent of $\delta$ and $\veps$. 
\end{theorem}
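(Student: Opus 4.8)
The idea is to treat the general solution as the zero-relative-vorticity regime of Theorem \ref{KGsystem} perturbed by a small, slowly growing ``geostrophic'' component carried by the relative vorticity $\theta=\nabla\times\vu-\rho$, and to close a bootstrap coupling the Klein-Gordon energy and decay estimates for the wave part with a transport estimate for $\theta$. Concretely, I would split $(\rho,\vu)=(\rho^g,\vu^g)+(\tilde\rho,\tilde\vu)$, with $\rho^g=(\Delta-1)^{-1}\theta$ and $\vu^g=\nabla^\perp(\Delta-1)^{-1}\theta$ the geostrophic reconstruction determined by $\theta(t,\cdot)$; by construction $(\rho^g,\vu^g)$ carries all of the relative vorticity and is geostrophically balanced ($(\vu^g)^\perp+\nabla\rho^g=0$, $\nc\vu^g=0$), so the remainder has zero relative vorticity and, by Lemma \ref{symm}, solves the symmetric quasilinear Klein-Gordon system with an inhomogeneous forcing. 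That forcing is made of products of $(\rho^g,\vu^g)$ with $(\tilde\rho,\tilde\vu)$ (and with itself) together with the $(\Delta-1)^{-1}$-smoothed quadratic terms $(\Delta-1)^{-1}\pt\theta=-(\Delta-1)^{-1}\nc(\theta\vu)$ measuring the failure of $(\rho^g,\vu^g)$ to solve the linear geostrophic equations exactly; all terms are of size $\|\theta\|\,\delta$ or $\|\theta\|^2$.

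The first ingredient is the transport bound for $\theta$. Since (\ref{theta}) and $\nc\vu^g=0$ show that $\theta$ is transported by a velocity with divergence $\nc\tilde\vu$, a Gronwall argument gives
\[ \|\theta(t)\|_{L^2\cap L^\infty}\le\veps\,\exp\!\Big(C\!\int_0^t\|\nc\tilde\vu(s)\|_{L^\infty}\,ds\Big), \]
and on the bootstrap interval the Klein-Gordon decay for the wave part yields $\|\nc\tilde\vu(t)\|_{L^\infty}\le C\delta(1+t)^{-1}$, hence $\|\theta(t)\|_{L^2\cap L^\infty}\le C\veps(1+t)^{C_2\delta}$ with $C_2$ controlled by the $(1+t)^{-1}$ wave-decay constant. (The higher Sobolev norms of $\theta$ can only be propagated with $\|\nabla\vu\|_{L^\infty}$, and in the commutators the full high energy of $\vu$, in the exponent, so they may grow faster; the point is that only the low norms of $\theta$ --- which see just the decaying $\nc\tilde\vu$ --- enter the binding estimate.)

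The second ingredient is the energy/decay scheme for the wave part. Here I would rerun the vector-field and normal form machinery behind Theorem \ref{KGsystem}: apply the Klein-Gordon vector fields $\{\pt,\px,\py,\,x_1\py-x_2\px,\,t\px+x_1\pt,\,t\py+x_2\pt\}$, perform the quadratic normal form to remove the non-resonant terms, and use the Klainerman-Sobolev inequality to obtain the $(1+t)^{-1}$ decay of $\vu-\vu^g$, valid as long as an a priori bound $\|(\rho,\vu)(t)\|_{\Hw^{k+1,k}}\le C\delta$ holds. The new feature is that the geostrophic piece enters the weighted energy estimate as a \emph{multiplicative}, variable-coefficient perturbation of amplitude $\|\rho^g\|_{L^\infty}+\|\vu^g\|_{L^\infty}\le C\|\theta\|_{L^2}\le C\veps(1+t)^{C_2\delta}$, so a Gronwall estimate produces
\[ \|(\rho,\vu)(t)\|_{\Hw^{k+1,k}}^2\le C\delta^2\,\exp\!\Big(c\veps\!\int_0^t(1+s)^{C_2\delta}\,ds\Big)\le C\delta^2\,\exp\!\big(c'\veps(1+t)^{1+C_2\delta}\big). \]
Consequently the bootstrap bound $\le C\delta$ --- and with it the decay estimates, the transport bound for $\theta$, and, via the standard local theory and continuation criterion for the symmetric hyperbolic system (\ref{RSWrho})--(\ref{RSWurho}), the existence of a unique classical solution --- persists as long as $\veps(1+t)^{1+C_2\delta}$ stays below a fixed constant, i.e.\ for $t\in[0,C_1\veps^{-1/(1+C_2\delta)}]$. (The genuinely resonant forcing at the level of the high-order energy, from geostrophic $\times$ wave interactions of size $\delta^2(1+t)^{-1}$, produces only growth $\exp(C\delta^2\log(1+t))$, harmless until exponentially long times and so irrelevant to the lifespan.)

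The main obstacle is precisely this feedback loop: the $(1+t)^{-1}$ decay of the wave part, needed throughout, rests on the weighted energy staying of size $\delta$, yet the relative vorticity --- and hence the geostrophic field it generates --- is amplified under transport at a rate set by the very velocity gradients one is trying to bound and is then fed back into the energy estimate as a growing coefficient. Closing the loop with the \emph{sharp} time weights, so that one loses only the factor $(1+t)^{1+C_2\delta}$ rather than an uncontrolled power of $t$ at each exchange, is the crux; a secondary technical point is the derivative loss in the transport estimate relative to the $\Hw^{k+1,k}$ scale used for $(\rho,\vu)$, which is why $\theta$ must be measured in the low norms $L^2$, $L^\infty$ (and $H^2$, as in the statement) and why the margin $k\ge52$ is needed to absorb the losses in the commutator and product estimates.
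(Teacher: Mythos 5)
Your strategy differs substantially from the paper's, and in its present form it has a real gap at its core. The paper does \emph{not} perform a time-dependent wave/geostrophic splitting, and it never reruns the vector-field/normal-form machinery with a forcing term. Instead it projects only the \emph{initial data}: it sets $\rho_0-\rhoK_0=(\Delta-1)^{-1}(\nabla\times\vu_0-\rho_0)$, $\vu_0-\vuK_0=\nabla^\perp(\rho_0-\rhoK_0)$, so that $\|(\rho_0,\vu_0)-(\rhoK_0,\vuK_0)\|_{H^3}\le C\veps$ while $(\rhoK_0,\vuK_0)$ has zero relative vorticity and size $C\delta$ in $\Hw^{k+1,k}$. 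Theorem \ref{KGsystem} then supplies a fixed global reference solution $\vU^K$ with $|\vU^K|_{W^{k-25,\infty}}\le C\delta/(1+t)$, and the whole argument reduces to estimating the difference $\vE=\vU-\vU^K$ in the \emph{low} norm $H^3$ by the plain symmetric-hyperbolic energy method: since $B_j$ is linear, $\vE$ solves a symmetric system whose coefficients involve only $\vE$ and the decaying $\vU^K$, giving the Riccati inequality $e'\le Ce\,(e+\delta/(1+t))$ with $e(0)\le C\veps$, which integrates explicitly to $e(t)\le(1+t)^{C\delta}\bigl(e^{-1}(0)-\tfrac{C}{1+C\delta}[(1+t)^{1+C\delta}-1]\bigr)^{-1}$ and hence to the lifespan $\veps^{-1/(1+C_2\delta)}$. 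No decay estimate for the perturbed (non-zero-vorticity) flow is ever needed.

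The gap in your route is precisely the step you flag as ``the crux'' but then assume: that the $(1+t)^{-1}$ decay of the wave part survives when the Klein--Gordon system is forced by, and has coefficients involving, the geostrophic fields reconstructed from the transported $\theta$. Georgiev's decay estimate (Theorem \ref{thm:Georgiev}) requires bounds on $\|(1+s+|y|)\Gb S\|_{L^2}$ for $|\beta|$ up to roughly $k-21$, i.e.\ high-order norms of the source under the full family $\Gamma$, including the Lorentz-type fields $L_j=x_j\pt+t\pa_j$. Your transport estimate controls $\theta$ only in $L^2\cap L^\infty$ (or low Sobolev norms); the $\Gamma$-norms of $\theta$ are not propagated by the transport equation (the $L_j$ do not interact well with $\pt+\vu\cn$), the weights $(1+s+|y|)$ are not preserved under advection, and $(\Delta-1)^{-1}$ does not commute with $L_j$. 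Moreover the quasilinear energy estimate at order $k$ needs $\Gamma^\alpha(\vu^g)$ for $|\alpha|\le k$, i.e.\ high norms of $\theta$, which you concede may grow much faster than $(1+t)^{C\delta}$. So the bootstrap as written does not close without a genuinely new ingredient. The lesson from the paper is that one can avoid all of this by comparing with a fixed globally-defined zero-vorticity solution and measuring the entire discrepancy in $H^3$, where only the $W^{4,\infty}$ decay of the reference solution (already proved) and a scalar ODE are needed.
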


This theorem confirms the key role that relative vorticity plays in the studies of Geophysical Fluid Dynamics (\cite{Pedlosky}). In fact, having two uncorrelated scales $\veps$ and $ \delta$ in (\ref{span:general})
allows us to solely let the size of the initial relative vorticity
$\ep\to0$ and achieve a very long lifespan of classical solutions, regardless of the total size of initial data. The proof, given in Section \ref{sec:general}, treats the full RSW system as perturbation to the zero-relative-vorticity one and utilizes the standard energy methods for symmetric hyperbolic PDE systems. The sharp estimates of Theorem \ref{mainTh} play a crucial role in controlling the total energy growth. We note that a similar problem of the compressible Euler equations is studied by Sideris in \cite{Sideris:2D}.

We also note by passing that the study of hyperbolic PDE systems
with small initial data is closely related, if not entirely
equivalent, to the singular limit problems with large initial data.
See \cite{Majda}, \cite{Babin} and references therein for results on
the particular case of inviscid RSW equations. A recent survey paper
\cite{Bresch} contains a collection of open problems and recent
progress on viscous Shallow Water Equations and related models.

We finally comment that all results in this paper should be true for two dimensional compressible Euler equations with rotating force and general pressure law. The proof remains largely the same except for the symmetrization part and associated energy estimates.

The structure of the rest of the paper is outlined as following. In
Section 2, we reformulate the RSW system into a symmetric hyperbolic
system of first order PDEs. Under the zero-relative-vorticity
constraint, it is further transformed into a system of quasilinear
Klein-Gordon equations with symmetric quasilinear part. Section 3 is
devoted to the local wellposedness of the RSW equation with general
initial data and zero-relative-vorticity initial data. Section 4
contains the proof of Theorem \ref{mainTh} in a series of lemmas,
adapting results from \cite{Georgiev}, \cite{Shatah}, \cite{OzawaSL}.
The discussion and proof of Theorem \ref{perTh} on general initial
data can be found in Section 5. The Appendix contains the proof of a
technical proposition used in Section 4.

\section{Reformulation of the Problem}

In order to obtain local wellposedness for (\ref{RSWrho}) and
(\ref{RSWurho}), we first symmetrize the system into a symmetric
hyperbolic system. This will also be used for proving global
existence, where we need to reduce (\ref{RSWrho}) and(\ref{RSWurho})
to a symmetric quasilinear Klein-Gordon system.

Introduce a symmetrizer $m:=2\left(\sqrt{1+\rho}-1\right)$ such that
$\rho=m+{1\over4}m^2$, then (\ref{RSWrho}), (\ref{RSWurho}) are
transformed into a symmetric hyperbolic PDE system,
\begin{align}
\label{RSWm}\pt m+\vu\cn m+\half m\nc\vu+\nc\vu&=0,\\
\label{RSWum} \pt\vu+\vu\cn\vu+\half m\nabla m+\nabla m+\vu^\perp&=0.
\end{align}

The following lemma asserts that, under the invariant (\ref{inv}),
the above system amounts to a system of Klein-Gordon equations with
symmetric quasilinear terms.

\begin{lemma}\label{symm}
Under the invariant (\ref{inv}), $\nabla\times\vu=\rho$, and
transformation $m=2\left(\sqrt{\rho+1}-1\right)$, the solution to
the RSW system (\ref{RSWrho}), (\ref{RSWurho}) satisfies the
following symmetric system of quasilinear Klein-Gordon equations for
$\vU:=\bma m\\\vu\ema$, \be\label{ptt}
\ptt\vU-\Delta\vU+\vU=\sum_{i,j=1}^2A_{ij}(\vU)\pa_{ij}\vU +
\sum_{j=1}^2A_{0j}(\vU)\pa_{0j}\vU+R(\vUd\otimes\vUd), \ee where
linear functions $A_{ij}$ and $A_{0j}$ map $\mR^3$ vectors to
\emph{symmetric} $3\times 3$ matrices and satisfy $A_{ij}=A_{ji}$.
The remainder term $R$ depends linearly on the tensor product
$\vUd\otimes\vUd$ with $\vUd:=(\vU^T,\pt\vU^T,\px\vU^T,\py\vU^T)$.
\end{lemma}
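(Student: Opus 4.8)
The plan is to differentiate the symmetrized system (\ref{RSWm})--(\ref{RSWum}) once in time and then feed the system back into itself to rewrite every resulting first-order time derivative, keeping on the right-hand side only the mixed space-time second derivatives $\pa_{0j}\vU$ permitted by the target equation, together with terms quadratic in $(\vU,\pt\vU,\px\vU,\py\vU)$. The computation is purely formal --- it is valid for any $C^2$ solution obeying the invariant --- and (\ref{inv}) is used exactly once in each scalar equation, precisely to turn the rotation terms into \emph{local} expressions in $\vU$; that is the only delicate point.

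For the height equation, applying $\pt$ to (\ref{RSWm}) gives $\ptt m+\pt(\vu\cn m)+\half\pt(m\nc\vu)+\nc\pt\vu=0$, and I substitute $\pt\vu$ from (\ref{RSWum}) into the last term, turning $\nc\pt\vu$ into $-\nc(\vu\cn\vu)-\half\nc(m\nabla m)-\Delta m-\nc\vu^\perp$. The point is that $\nc\vu^\perp=-\nabla\times\vu$, and under (\ref{inv}) this equals $-\rho=-m-\frac14 m^2$; the linear part supplies the mass term $m$, which together with $-\Delta m$ and $\ptt m$ assembles into the Klein-Gordon operator $\ptt m-\Delta m+m$, while $m^2$ is quadratic. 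Expanding by the product rule, each of $\pt(\vu\cn m)$ and $\half\pt(m\nc\vu)$ splits into a quadratic first-order piece and a term of type $u_k\pa_{0k}m$ or $m\,\pa_{0k}u_k$ (inside $m\,\nc\pt\vu$ one must \emph{not} re-substitute, to avoid creating cubic terms), while $\nc(\vu\cn\vu)$ and $\half\nc(m\nabla m)$ yield quadratic pieces plus the second-order terms $u_k\pa_{kl}u_l$ and $\half m\,\Delta m$.

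For the velocity equations I apply $\pt$ to (\ref{RSWum}) to get $\ptt\vu+\pt(\vu\cn\vu)+\half\pt(m\nabla m)+\nabla\pt m+(\pt\vu)^\perp=0$, substitute $\pt m$ from (\ref{RSWm}) into $\nabla\pt m$, and invoke the two dimensional identity $\nabla(\nc\vu)=\Delta\vu-\nabla^\perp(\nabla\times\vu)$ together with $\nabla\times\vu=\rho=m+\frac14 m^2$; this produces $-\Delta\vu$ (one ingredient of the Klein-Gordon operator), a linear term $\nabla^\perp m$, and a quadratic term $\nabla^\perp(m^2)$. Independently, substituting $\pt\vu$ from (\ref{RSWum}) into $(\pt\vu)^\perp$ and using $(\vu^\perp)^\perp=-\vu$ produces the zeroth-order term $\vu$ (the mass term), a linear term $-\nabla^\perp m$ that \emph{cancels} the one just found, and quadratic terms $(\vu\cn\vu)^\perp$ and $m\,\nabla^\perp m$. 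The remaining pieces $\pt(\vu\cn\vu)$, $\half\pt(m\nabla m)$, $\nabla(\vu\cn m)$ and $\half\nabla(m\nc\vu)$ split as before into quadratic first-order parts plus second-order terms $u_k\pa_{0k}\vu$, $m\,\pa_{0l}m$, $u_k\pa_{lk}m$ and $\half m\,\pa_{lk}u_k$. The exact cancellation of the stray term $\nabla^\perp m$ --- which is of neither Klein-Gordon nor quadratic type, and is absorbable nowhere --- is the crux of the whole reformulation, and it rests squarely on the zero-relative-vorticity invariant.

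Finally I collect the pieces. Every surviving second-order derivative carries a coefficient linear in $\vU=(m,\vu)$, so they assemble into $\sum_{i,j}A_{ij}(\vU)\pa_{ij}\vU+\sum_j A_{0j}(\vU)\pa_{0j}\vU$ with $A_{ij}$, $A_{0j}$ linear, while the rest --- each a product of two factors drawn from $(\vU,\pt\vU,\px\vU,\py\vU)$ --- defines $R(\vUd\otimes\vUd)$. Since $\pa_{ij}=\pa_{ji}$ one may symmetrize in $(i,j)$ at no cost, so what remains is to check that each $A_{ij}$ and $A_{0j}$ is a symmetric $3\times3$ matrix; this amounts to matching cross entries: the coefficient of $\pa_{ij}u_l$ in the $m$-equation (arising from $\nc(\vu\cn\vu)$) equals the coefficient of $\pa_{ij}m$ in the $u_l$-equation (arising from $\nabla(\vu\cn m)$), the coefficient of $\pa_{0j}u_j$ in the $m$-equation equals the coefficient of $\pa_{0j}m$ in the $u_j$-equation (both $-\half m$), and similarly within the $\vu$-block. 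These equalities hold precisely because the nonlinearities of (\ref{RSWm})--(\ref{RSWum}) already appear in the symmetric-hyperbolic combinations $\vu\cn(\cdot)$ and $\half m\,(\nc\vu,\nabla m)$ --- which is exactly why the symmetrizer $m$ was introduced.
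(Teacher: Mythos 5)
Your proposal is correct and is essentially the paper's own argument: the paper also differentiates the symmetrized system (\ref{RSWm})--(\ref{RSWum}) in time, back-substitutes the first-order system, obtains the Klein--Gordon operator from the composition of the linear parts (using the invariant and $\rho=m+\frac14m^2$ to reduce the curl terms to the mass term plus a quadratic remainder), and reads off the symmetry of the quasilinear coefficients from the symmetric-hyperbolic structure. The only difference is presentational --- the paper organizes the computation in matrix form ($N_1$, $N_2$, $N_3$ and $\cL^2$, with coefficients $u_bJ_a+\half mJ_aJ_b$ symmetrized in $(a,b)$) whereas you carry it out componentwise --- so the two proofs coincide in substance.
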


Here and below, for notational convenience, we use both $\pt$ and
$\pa_0$ to denote the time derivatives.
\begin{proof}
Rewrite (\ref{RSWm}), (\ref{RSWum}) into a matrix-vector form,
\be
\label{RSWmatrix}\pt\vU+\sum_{a=1,2}(u_aI+\half mJ_a)\pa_a\vU=\cL(\vU),
\ee
where
\be\label{defJL}
J_1:=\bma0&1&0\\1&0&0\\0&0&0\ema,\,\,\,\, J_2:=\bma0&0&1\\0&0&0\\1&0&0\ema,
\,\,\, \, \text{and}\,\, \cL(\vU):=-\bma \nc\vu\\
\nabla m+\vu^\perp\ema. \ee

By taking time derivative on the above system, we have
\[\ptt\vU+N(\vU)=\cL^2(\vU),\]
where the nonlinear term
\be\label{N1N2}\begin{split}
N(\vU)&=\pt\sum_{a=1,2}(u_aI+\half
mJ_a)\pa_a\vU+\cL\left(\sum_{a=1,2} (u_aI+\half
mJ_a)\pa_a\vU\right)\\&:=N_1+N_2.
\end{split}\ee
The $\cL^2$ term, using the  calculus identity
$\nabla(\nabla\cdot\vu) -\nabla^\perp(\nabla\times\vu)=\Delta\vu$,
is
\be\label{N3}\begin{split}
\cL^2(\vU)&=\bma \nc(\nabla m+\vu^\perp)\\ \nabla(\nc\vu)+
(\nabla m+\vu^\perp)^\perp\ema\\
&=\bma(\Delta-1)m -(\nabla\times\vu-m)\\
(\Delta-1)\vu+\nabla^\perp(\nabla\times\vu-m)\ema\\
&=(\Delta-1)\vU+\bma-(\nabla\times\vu-\rho+{1\over4} m^2)\\
\nabla^\perp(\nabla\times\vu-\rho+{1\over 4} m^2)\ema \qquad
\mbox{since  } \rho=m+{1\over4}m^2 \\
&=(\Delta-1)\vU+{1\over4}\bma-m^2\\
\nabla^\perp(m^2)\ema\qquad\mbox{ under (\ref{inv})}\\
&:=(\Delta-1)\vU+N_3.\end{split}\ee

Now that we've revealed the Klein-Gordon structure of (\ref{ptt}),
it suffices to show that the other terms $N_1,N_2,N_3$ can all be
split into a symmetric second order part and a remainder lower order
part as given in (\ref{ptt}).
\begin{itemize}
\item The $N_1$ term in (\ref{N1N2}). It is easy to see that the lower
order terms (with less than second order derivatives) in $N_1$ are
quadratic in $\vUd$, i.e. linear in $\vUd\otimes\vUd$. The terms
with second order derivatives are
\[\sum_{a=1,2}(u_aI+\half mJ_a)\pt\pa_a\vU\]
where matrices $I$, $J_a$ are all symmetric. 
\item The $N_2$ term in (\ref{N1N2}).
Observe that the linear operator $\cL$ partially comes from a
linearization of the nonlinear terms in (\ref{RSWmatrix}) and it
indeed can be represented as
\[\cL(\vU)=\sum_{a=1,2}J_a\pa_a\vU+K\vU\]
with constant matrix $K$. Thus, manipulate the $N_2$ term,
\[\begin{split}
N_2&=\sum_{a=1,2}J_a\pa_a\left(\sum_{b=1,2}(u_bI+\half mJ_b)\pa_b\vU\right)
+K\left(\sum_{b=1,2}(u_bI+\half mJ_b)\pa_b\vU\right)\\
&=\sum_{a=1,2}\sum_{b=1,2}(u_bJ_{a}+\half mJ_{a}J_b)\pa_{a}\pa_b\vU+
\mbox{ quadratic terms of }\vUd.
\end{split}\]
The quasilinear terms above have the desired symmetric structure
since for each index pair $(a,b)$, the coefficient of
$\pa_{a}\pa_b\vU=\pa_b\pa_{a}\vU$ is
\[
\frac{1}{2}(u_bJ_{a}+\half
mJ_{a}J_b)+\frac{1}{2}(u_{a}J_{b}+\half mJ_{b}J_{a})
\]
which, by the definition of $J_a$, is symmetric.
\item The $N_3$ term in (\ref{N3}). By
definition, this term has no second order derivatives and is
quadratic in $\vUd$.
\end{itemize}
\end{proof}

\section{Local Wellposedness}

As in the previous section, let
$\pa_0=\frac{\pa}{\pa t}$, $\pa_1=\frac{\pa}{\pa x_1}$, $\pa_2=
\frac{\pa}{\pa x_2}$. Define the vector fields
\begin{align}\label{def:vfields}
\Gamma:=\{\Gamma_j\}_{j=1}^6=\{\pa_0,\pa_1,\pa_2, L_1, L_2,
\Omega_{12}\},
\end{align}
where
\begin{align*}
L_j:=x_j\pa_t+t\pa_j,\,\,j=1,2; \quad
\Omega_{12}:=x_1\pa_2-x_2\pa_1.
\end{align*}
We abbreviate
\begin{align*}
\pa^{\alpha}=\pa_t^{\alpha_1}\pa_1^{\alpha_2}\pa_2^{\alpha_3}
\quad\mbox{ for } \alpha=(\alpha_1,\alpha_2,\alpha_3),
\end{align*}
and
\begin{align*}
\Gamma^{\beta}=\Gamma_1^{\beta_1}\cdots\Gamma_6^{\beta_6}
\quad\mbox{ for } \beta=(\beta_1,\cdots, \beta_6).
\end{align*}

The local wellposedness of (\ref{RSWm}) and (\ref{RSWum})
and their regularity are contained in the following theorem.
\begin{theorem}\label{LocalTh}
\begin{itemize}
\item[(i)] Let $(m_0,\vu_0)\in H^n$ with $n\ge3$. Then, there exists a $T>0$
depending only on $\|(m_0,\vu_0)\|_{H^3}$ such that (\ref{RSWm}), (\ref{RSWum})
admit a unique solution $\vU=\bma m \\ \vu\ema $  on $[0, T]$ satisfying
\begin{align}
\vU\in  \bigcap_{j=0}^{n} C^j([0,T], H^{n-j}).
\end{align}
\item[(ii)] Under the assumptions in (i), also assume
$\rho_0=\pa_1 u_{2,0}-\pa_2 u_{1,0}$. Then,
\begin{align}\label{Zerovor2}
\rho(t,\cdot)=\pa_1 u_2(t,\cdot)-\pa_2 u_1(t,\cdot)\,\, \text{for}\,\, t\in[0,T],
\end{align}
and $\vU=\bma m\\\vu\ema=\bma2\left(\sqrt{\rho+1}-1\right)\\\vu\ema$
satisfies the Klein-Gordon system (\ref{ptt}). If the initial data
belong to the weighted Sobolev space (cf. definition
(\ref{def:weighted})) such that $\vU_0\in \Hw^{k+1,k}$ with $k\geq
3$, then the above solution $\vU$ satisfies
\begin{align}\label{locvec}
\Ga\vU,\,\Ga\pa\vU\in C([0,T], L^2),\qquad(1+|x|)\Gb\vU\in C([0,T],L^\infty),
\end{align}
for any multi-indices $|\alpha|\leq k$ and $|\beta|\leq k-3$.
\end{itemize}
\end{theorem}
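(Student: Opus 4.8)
The plan is to obtain (i) from the classical theory of quasilinear symmetric hyperbolic systems and to bootstrap (ii) from it in four steps: propagate the zero-relative-vorticity constraint, extract the Klein--Gordon structure via Lemma~\ref{symm}, identify the initial data for the vector fields $\Gamma_j$, and then propagate the weighted $L^2$ bounds by commuting the $\Gamma_j$ through the symmetrized system. For (i), rewrite (\ref{RSWmatrix}) as $\pt\vU+\sum_{a=1,2}(u_aI+\half mJ_a)\pa_a\vU=\cL(\vU)$: the coefficient matrices $u_aI+\half mJ_a$ are symmetric and depend linearly---hence smoothly---on $\vU$, $\cL$ is affine, and the symmetrizer is $A_0=I$, so the system is symmetric hyperbolic in the strong sense, with no positivity constraint on $m$. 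Since $n\ge3>\tfrac n2+1$ in two space dimensions, the classical quasilinear symmetric-hyperbolic theory (see e.g.\ \cite{Majda}) yields $T=T(\|\vU_0\|_{H^3})>0$ and a unique solution in $C([0,T],H^n)$; differentiating (\ref{RSWmatrix}) repeatedly in $t$ and using the equation itself to trade time derivatives for spatial ones gives $\vU\in\bigcap_{j=0}^nC^j([0,T],H^{n-j})$, so $\vU$ is in particular a classical solution of (\ref{RSWm})--(\ref{RSWum}). I expect this step to be essentially a citation.

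For (ii), first the vorticity part. With $\rho=m+\tfrac14 m^2$ one checks algebraically, using $\nabla\rho=(1+\tfrac m2)\nabla m$, $\pt\rho=(1+\tfrac m2)\pt m$ and $(1+\tfrac m2)^2=1+\rho$, that $(\rho,\vu)$ solves (\ref{RSWrho})--(\ref{RSWurho}); hence, as already noted in the Introduction, $\theta:=(\pa_1u_2-\pa_2u_1)-\rho$ satisfies the linear transport equation (\ref{theta}). Its coefficient $\vu$ is $C^1$ because $H^n\hookrightarrow C^1$ for $n\ge3$, and $\theta(0)=0$ by hypothesis; a Gr\"onwall estimate for $\|\theta(t)\|_{L^2}$ (or uniqueness for transport equations) then gives $\theta\equiv0$ on $[0,T]$, which is (\ref{Zerovor2}). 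With this invariant available on $[0,T]$, the computation in the proof of Lemma~\ref{symm} becomes rigorous---it differentiates (\ref{RSWmatrix}) once in $t$, and $\vU\in C^2([0,T],H^{n-2})$---so $\vU$ satisfies the Klein--Gordon system (\ref{ptt}) as an identity in $C([0,T],H^{n-2})$.

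Now assume in addition $\vU_0\in\Hw^{k+1,k}\subset H^{k+1}$, so that (i) applies with $n=k+1\ge4$. For the initial data: at $t=0$ one has $L_j=x_j\pt$ and $\Omega_{12}=x_1\pa_2-x_2\pa_1$, while (\ref{RSWmatrix}) expresses $\pt\vU|_{t=0}$, and inductively $\pt^j\vU|_{t=0}$, as spatial derivatives of $\vU_0$ times polynomials in $\vU_0$; hence each generator $\Gamma_j$ costs at most one spatial derivative and at most one power of $|x|$, and the product/algebra estimates for the weighted spaces $\Hw^{l,s}$ give $\Ga\vU|_{t=0},\,\Ga\pa\vU|_{t=0}\in L^2$ for $|\alpha|\le k$ (this is exactly the budget provided by $\Hw^{k+1,k}$). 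To propagate these, commute $x_1,x_2$ and $\pa_0,\pa_1,\pa_2$ through (\ref{RSWmatrix}) (the $t$-factors in $L_j$ are harmless since $t\le T$): $[\pa_\mu,\sum_a(u_aI+\half mJ_a)\pa_a]=\sum_a(\pa_\mu(u_aI+\half mJ_a))\pa_a$ is again first order with coefficients costing one derivative of $\vU$, and $[x_i,\sum_a(u_aI+\half mJ_a)\pa_a]=-(u_iI+\half mJ_i)$ is of order zero, so a multi-commutator $[x^d\pa^c,\sum_a(u_aI+\half mJ_a)\pa_a]\vU$ is a sum of terms each carrying at most $|c|$ derivatives on $\vU$ and at most $|d|$ powers of $x$, with all coefficients controlled on $[0,T]$ by (i) and Sobolev embedding ($H^j\hookrightarrow L^\infty$ on $\mR^2$ for $j\ge2$). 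Feeding this into the standard $L^2$ energy estimate for (\ref{RSWmatrix})---coercive because $A_0=I$, so no smallness is needed and the full interval $[0,T]$ is retained---gives $\frac{d}{dt}\sum_{|d|\le k,\,|c|\le k+1}\|x^d\pa^c\vU\|_{L^2}^2\le C(t)\sum_{|d|\le k,\,|c|\le k+1}\|x^d\pa^c\vU\|_{L^2}^2$ with $C$ bounded on $[0,T]$. Gr\"onwall then keeps these norms finite and continuous on $[0,T]$, and since $\Ga\vU,\Ga\pa\vU$ with $|\alpha|\le k$ are polynomial-in-$(x,t)$ combinations of the $x^d\pa^c\vU$ with $|d|\le k$, $|c|\le k+1$, the first part of (\ref{locvec}) follows; the weighted $L^\infty$ bound then comes from the two-dimensional Klainerman--Sobolev inequality $(1+|x|)\,|v|\le(1+t+|x|)\,|v|\le C\sum_{|\gamma|\le 3}\|\Gamma^\gamma v(t)\|_{L^2}$ applied to $v=\Gb\vU$, which requires $|\beta|+3\le k$.

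I expect the last step to be the main obstacle: organizing the multi-commutators $[\Ga,\sum_a(u_aI+\half mJ_a)\pa_a]$ so that the top-order term is exactly $\sum_a(u_aI+\half mJ_a)\pa_a\Ga\vU$---which preserves the symmetric-hyperbolic energy structure (equivalently, the symmetry $A_{ij}=A_{ji}$ supplied by Lemma~\ref{symm})---while every remaining term is of strictly lower differential order with its $x$-weight inside the budget, and then verifying that each such lower-order coefficient is controlled on $[0,T]$ by the $H^n$ bound of (i) via Sobolev embedding in $\mR^2$. Carrying this out on the first-order system (\ref{RSWmatrix}) rather than on the second-order system (\ref{ptt}) is convenient precisely because the symmetrizer $A_0=I$ makes the basic energy unconditionally coercive, so the weighted bounds persist on the entire existence interval from (i) without any additional restriction on the data.
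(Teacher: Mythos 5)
Your argument is correct and follows essentially the same route the paper takes, which is simply to cite the standard symmetric-hyperbolic local theory for (i), the vorticity-transport derivation of (\ref{inv}) plus Lemma \ref{symm} for the first half of (ii), and the Klainerman--Ponce/Shatah weighted vector-field propagation for (\ref{locvec}); you have written out the details the paper delegates to \cite{Kato, Majda84, KlainermanP, Shatahev}. One small caution: the pointwise inequality $(1+t+|x|)|v|\le C\sum_{|\gamma|\le3}\|\Gamma^\gamma v\|_{L^2}$ is not a valid global Klainerman--Sobolev estimate for this vector-field collection in two dimensions (which is precisely why Georgiev's Theorem \ref{thm:Georgiev} is needed in Section 4), but on the compact interval $[0,T]$ the weighted $L^2$ bounds $\|x^d\pa^c\vU\|_{L^2}$ that you propagate yield $(1+|x|)\Gb\vU\in L^\infty$ directly through the weighted Sobolev embedding, so your conclusion stands.
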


\begin{proof}
The proof of (i) follows from the standard local wellposedness and
regularity theory for symmetric hyperbolic system, cf. \cite{Kato,
Majda84}.

For part (ii), (\ref{Zerovor2}) comes from the derivation of
(\ref{inv}). Then, it follows from Lemma \ref{symm} that $\vU$
solves (\ref{ptt}). Finally, the proof of (\ref{locvec}) is based on
the arguments in \cite{KlainermanP, Shatahev}. Note that, using
(\ref{RSWm}),  (\ref{RSWum}), one has
\begin{align*}
  \vU(0,\cdot)\in \Hw^{k+1,k} \text{ and } \pa_t^l \vU(0,\cdot)\in \Hw^{k+1-l,k}
   \end{align*}
as long as $\vu_0\in H^{k+2,k}$ and $\rho_0=\nabla\times\vu_0$.
\end{proof}

\section{Global Existence and Asymptotic Behavior with Zero Relative Vorticity}

Throughout this section, we focus on the solutions with zero
relative vorticity -- cf. (\ref{inv}). Theorem \ref{LocalTh}, part
(ii), suggests that the RSW system be treated as a system of
quasilinear Klein-Gorden equations. To this end, it is convenient to
introduce the following generalized Sobolev norms associated with
vector fields $\Gamma$ defined in (\ref{def:vfields}).
\begin{align*}
\LGN\vU\RGN{l,d}(t):=&
\sum_{|\alpha|\leq l }|(1+t+|x|)^{-d}\Gamma^{\alpha} \vU(t,x)|_{L^{\infty}_x},\\
\LGNN\vU\RGNN{l}(t):=&\sum_{|\alpha|\leq l}
\|\Gamma^{\alpha} \vU(t,x) \|_{L^2_x}.
\end{align*}

To extend the local solution of (\ref{RSWm}) and (\ref{RSWum})
globally in time, we need to derive the decay and energy estimates
of solutions to (\ref{ptt}). We start with
defining a functional (see e.g. \cite{OzawaSL}) measuring the size of the solution at time
$t\ge0$,\be\label{def:X}\begin{split}
X(t):=&\sup_{s\in[0,t]}\left\{\LGN\vU\RGN{k-25,-1}(s)+\LGNN\vU\RGNN{k-9}(s)+\LGNN\tpa \vU\RGNN{k-9}(s)\right.\\
&\left.+(1+s)^{-\sigma}\LGNN\vU\RGNN{k}(s)+(1+s)^{-\sigma}\LGNN\tpa
\vU\RGNN{k}(s)\right\},
\end{split}\ee
here, pick any fixed $\sigma\in(0,1/2)$ and $k\ge52$.

We then state and prove the following global existence result regarding any symmetric quasilinear system of Klein-Gordon equations in 2D. Two key lemmas used in the proof will be discussed immediately after this.
\begin{theorem}\label{KGsystem}Consider a two dimensional $n$ by $n$ system (\ref{ptt}) satisfying the conclusion of Lemma \ref{symm}. Then, for any $k\ge52$, there exists a universal constant $\delta_0$ such that the system admits a unique classical solution for all times if
\begin{equation*}
\|\vU_0\|_{\Hw^{k+1,k}}=\delta<\delta_0.
\end{equation*}
In particular, $X(t)\le C\delta$ uniformly for all positive times.
\end{theorem}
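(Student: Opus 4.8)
The plan is a continuity (bootstrap) argument built on the functional $X(t)$ in (\ref{def:X}). By Theorem \ref{LocalTh}(ii), data $\vU_0\in\Hw^{k+1,k}$ produce a unique classical solution on a maximal interval $[0,T^*)$, all the quantities entering $X$ are continuous in $t$, and $X(0)\le C\delta$. It therefore suffices to prove the a priori estimate: there are universal $C_0,C_1$ so that whenever $X(t)\le\eta$ on $[0,T]\subset[0,T^*)$ with $\eta$ small, one in fact has $X(t)\le C_0\delta+C_1\eta^{3/2}$ on $[0,T]$. Choosing $\delta_0$ small enough that $C_1\eta^{1/2}\le\tfrac12$ with $\eta:=4C_0\delta_0$, the bootstrap strictly improves and a continuity argument gives $X(t)\le\eta$ for all $t<T^*$; this excludes finite-time blowup (so $T^*=\infty$) and yields the asserted uniform bound $X(t)\le C\delta$.

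\emph{Step 1: decay.} The fields $\Gamma=\{\pa_0,\pa_1,\pa_2,L_1,L_2,\Omega_{12}\}$ are chosen precisely because each commutes with the Klein-Gordon operator $\ptt-\Delta+1$; applying $\Gamma^\alpha$ to (\ref{ptt}) therefore generates only commutators with the nonlinearity, whose matrix coefficients $A_{ij},A_{0j}$ are \emph{linear} in $\vU$. A Klainerman-type weighted Sobolev inequality adapted to Klein-Gordon --- one of the two lemmas quoted right after this theorem, cf.\ \cite{Georgiev} --- then converts the $L^2$ quantities $\LGNN\vU\RGNN{k-9}$ and $\LGNN\pa\vU\RGNN{k-9}$ into the sharp pointwise decay $(1+t)^{-1}$ recorded by $\LGN\vU\RGN{k-25,-1}$; the fixed numbers of lost derivatives ($9$ for the $L^2$ level, $25$ for the $L^\infty$ level) are dictated by this embedding and by leaving room for the commutator terms.

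\emph{Step 2: energy.} Apply $\Gamma^\alpha$ to (\ref{ptt}), pair it with $\pa_t\Gamma^\alpha\vU$ in $L^2$, and integrate by parts. The structural hypotheses of Lemma \ref{symm} --- $A_{ij}(\vU)$, $A_{0j}(\vU)$ symmetric and $A_{ij}=A_{ji}$ --- are exactly what allow the top-order parts of $\sum A_{ij}(\vU)\pa_{ij}\Gamma^\alpha\vU$ and $\sum A_{0j}(\vU)\pa_{0j}\Gamma^\alpha\vU$ to be absorbed into a quadratic energy functional, leaving only error terms carrying a factor of $\pa\vU$ in $L^\infty$ and \emph{no} loss of derivatives (the quasilinear Klein-Gordon energy estimate, in the spirit of \cite{Klainerman,Shatah}). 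Inserting the $(1+t)^{-1}$ decay from Step 1, the energy inequality at the level $k-9$ has a coefficient of size $\eta(1+t)^{-1}$, which by itself only yields growth $(1+t)^{C\eta}$; this is why the top-order norms $\LGNN\vU\RGNN{k}$, $\LGNN\pa\vU\RGNN{k}$ enter $X$ with the damping weight $(1+s)^{-\sigma}$, $\sigma\in(0,1/2)$, while the $k-9$ norms are kept genuinely bounded by closing their estimate against the slowly growing top-order norms.

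\emph{Step 3: normal forms (the main obstacle).} The real difficulty --- and the reason $N=2$ is borderline --- is the quadratic remainder $R(\vUd\otimes\vUd)$ in (\ref{ptt}): with only $(1+t)^{-1}$ decay, estimating its energy contribution directly produces a divergent $\int_0^t(1+s)^{-1}\,ds$. Following \cite{OzawaSL} and its quasilinear counterpart \cite{OzawaQL}, one introduces a quadratic normal-form change of unknown $\vU\mapsto\vV=\vU+\mathcal{B}(\vUd,\vUd)$ with $\mathcal{B}$ a suitable bilinear Fourier multiplier engineered to cancel the quadratic terms; since every component of (\ref{ptt}) carries the same mass $1$, the resonance denominators $\langle\xi\rangle\pm\langle\eta\rangle\pm\langle\xi-\eta\rangle$ stay bounded away from $0$, so $\mathcal{B}$ is an order-zero (indeed mildly smoothing) bounded operator and $\vV$ is equivalent to $\vU$ in every norm appearing in $X$. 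After the substitution the quadratic term is traded for cubic terms, which decay like $(1+t)^{-2}$ and are time-integrable, plus commutator terms controlled by Steps 1--2. The delicate point is to verify that $\mathcal{B}$ commutes acceptably with the vector fields $\Gamma$ (the needed commutator identities are those of \cite{OzawaSL}) and is compatible with the quasilinear top-order structure and the symmetric energy estimate; this is the technical heart of the proof, and the place where the $n\times n$, quasilinear setting demands more than the scalar semilinear case. Assembling the $\vV$-estimates, translating back to $\vU$, and summing over $|\alpha|\le k$ produces the desired inequality $X(t)\le C_0\delta+C_1 X(t)^{3/2}$; the bootstrap then closes as described, and the large value $k\ge52$ leaves ample regularity for the solution to be classical.
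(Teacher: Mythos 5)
Your architecture is the same as the paper's: a continuity argument on $X(t)$, closed by an a priori bound of the form $X\le C(\delta+X^2)$ obtained from exactly the two lemmas you anticipate (a low-order decay/$L^2$ lemma driven by a normal form plus Georgiev's decay estimate, and a top-order symmetric energy estimate that tolerates $(1+t)^{\sigma}$ growth). Two details of your sketch, however, are stated in a way that would fail if taken literally. First, the resonance denominators are \emph{not} uniformly bounded away from zero: for equal masses in 2D one has $\langle\xi\rangle+\langle\eta\rangle-\langle\xi+\eta\rangle\to 0$ like $|\xi|^{-1}$ along $\eta=\xi\to\infty$, so the normal-form multiplier is not an order-zero (let alone smoothing) operator; its symbol grows polynomially in frequency. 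This is precisely why the paper imposes the growth condition $|D^N\hat{Q}(\xi,\eta)|\le C_N(1+|\xi|^4+|\eta|^4)$ and why $\vV=\vU+\vW$ is \emph{not} equivalent to $\vU$ at the top regularity level: the quadratic correction $\vW$ loses derivatives, and the gaps built into $X(t)$ (norms at levels $k$, $k-9$, $k-25$, together with $k\ge 52$) are exactly what absorb that loss. Your claim "$\vV$ is equivalent to $\vU$ in every norm appearing in $X$" must be replaced by "$\vW$ is controlled, with a fixed derivative loss, by products of lower-order norms of $\vU$" (the paper's Proposition on the trilinear forms $[f,q,g]$). Second, the $(1+t)^{-1}$ pointwise decay is not obtained by a bare Klainerman--Sobolev embedding from the bounded $H^{k-9}_{\Gamma}$ norms; such an embedding is not available here. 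It comes from Georgiev's decay estimate for the \emph{inhomogeneous} linear Klein--Gordon equation, applied to $\Gamma^{\alpha}\vV$ \emph{after} the normal form, so that the source is cubic and its weighted $L^2$ norms are summable in the Littlewood--Paley time decomposition. In other words, the normal form is needed for the decay step itself, not only to render the Duhamel/energy integral convergent. With these two corrections your outline matches the paper's proof.
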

\begin{proof}By the definition of $X(t)$ and local existence (\ref{locvec}) of Theorem \ref{LocalTh}, there exists $T$ such that
\be\label{XT}
  X(T)\leq 4C_1\delta.
 \ee
 Here, we choose constant $C_1$ to be greater than all constants appearing in Lemma \ref{lemL} and \ref{lemenergy} below. Then, choose $\delta$ to be sufficiently small so that the assumptions of Lemma \ref{lemL} and \ref{lemenergy} are satisfied, which in turn implies
\[
X(T)\leq 2C_1\delta+32C_1^3\delta^2.
\]
Impose one more smallness condition on $\delta$ so that $X(T)\le 3C_1\delta$ in the above estimate. Finally, by the continuity argument, we can extend $T$ in (\ref{XT}) to infinity, i.e., have $X(t)\le 4C_1\delta$ uniformly for all positive times. 
\end{proof}

The following lemmas provide estimates on the lower order norms and highest order norms of $X(t)$ respectively. The quadratic term $X^2(t)$, rather than linear term, on the RHS of these estimates guarantees that we can extend such estimates to global times as long as $X(t)$ stays sufficiently small.
\begin{lemma}\label{lemL}
Assume
$\|\vU_0\|_{\Hw^{k+1,k}}\le1$, $X(t)\le1$. Then, the solution $\vU$
of (\ref{ptt}) satisfies \be\label{Lestimate}
\LGN\vU\RGN{k-25,-1}(t)+\LGNN\vU\RGNN{k-9}(t)+\LGNN\pa\vU\RGNN{k-9}(t)\leq
C(\|\vU_0\|_{\Hw^{k+1,k}}+ X^2(t)). \ee as long as the solution
exists. Here, constant $C$ is independent of $\delta$ and $t$.
\end{lemma}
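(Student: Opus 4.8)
The plan is to estimate separately the three pieces on the left-hand side of \eqref{Lestimate}: the weighted $L^\infty$ norm $\LGN\vU\RGN{k-25,-1}$, and the two $L^2$-based energy norms $\LGNN\vU\RGNN{k-9}$ and $\LGNN\pa\vU\RGNN{k-9}$, all of which are ``low-order'' relative to the top order $k$. The key structural input is that \eqref{ptt} is a quasilinear Klein--Gordon system whose quadratic nonlinearity, after the symmetrization in Lemma \ref{symm}, splits into a symmetric second-order quasilinear part $\sum A_{ij}(\vU)\pa_{ij}\vU+\sum A_{0j}(\vU)\pa_{0j}\vU$ and a genuinely quadratic zeroth-order remainder $R(\vUd\otimes\vUd)$. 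I would first record the standard commutation properties of the vector fields $\Gamma=\{\pa_0,\pa_1,\pa_2,L_1,L_2,\Omega_{12}\}$ with the Klein--Gordon operator $\Box+1=\ptt-\Delta+1$: each $\Gamma_j$ commutes with $\Box+1$ (the $L_j$ and $\Omega_{12}$ do because the Klein--Gordon operator is Lorentz invariant), so applying $\Gamma^\alpha$ to \eqref{ptt} yields $(\Box+1)\Gamma^\alpha\vU = \Gamma^\alpha(\text{nonlinearity})$, and by the Leibniz rule the right-hand side is a sum of terms each of which is a product of $\Gamma$-derivatives of $\vU$ of total order $\le|\alpha|$, with at most one factor carrying two extra spatial/space-time derivatives (coming from the quasilinear $\pa_{ij}$, $\pa_{0j}$ terms).

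For the $L^2$ energy estimates I would use the standard Klein--Gordon energy $E_\ell(t)=\sum_{|\alpha|\le\ell}\int |\pt\Gamma^\alpha\vU|^2+|\nabla\Gamma^\alpha\vU|^2+|\Gamma^\alpha\vU|^2\,dx$ with $\ell=k-9$, differentiate in $t$, and integrate by parts. Here the symmetry $A_{ij}=A_{ji}=A_{ij}^T$ is essential: the worst terms $\int \pt\Gamma^\alpha\vU\cdot A_{ij}(\vU)\pa_{ij}\Gamma^\alpha\vU\,dx$ are handled by integrating one derivative by parts and using that the symmetric part of $A_{ij}\pa_i$ produces, modulo lower-order commutators, a perfect time derivative $\tfrac12\pt\int A_{ij}(\vU)\pa_j\Gamma^\alpha\vU\cdot\Gamma^\alpha\vU$-type quantity whose cost is $\|\pa A(\vU)\|_{L^\infty}$ times the energy — and $\|\pa A(\vU)\|_{L^\infty}\lesssim\LGN\vU\RGN{k-25,-1}(t)\le X(t)$ up to harmless $(1+t)$ factors, with one more factor of $X(t)$ from an $L^\infty$ norm of the remaining factor, giving $\tfrac{d}{dt}E_\ell \lesssim \frac{X(t)}{1+t}E_\ell + (\text{cubic})$. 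One must check that spreading $|\alpha|\le k-9$ derivatives over the (at most) quadratic nonlinearity always leaves the bulk on factors estimable in $L^2$ at order $\le k-9$ while the $L^\infty$ factor needs only $\lesssim k-25$ derivatives plus the Klainerman--Sobolev gain; since $(k-9)+(k-25)$ is comfortably less than twice $k$ this is where the numerical slack in $k\ge52$ is consumed. After a Gronwall argument in $t$, using $\int_0^\infty \frac{X(s)}{1+s}\,ds$ — which is only logarithmically divergent — one would still face a possible $(1+t)^{cX}$ growth; to kill it I would instead run the argument keeping track of the fact that, because these are \emph{low-order} norms bounded by the top-order norm which itself only grows like $(1+t)^\sigma$, the right-hand side of the energy inequality can be closed by the quadratic quantity $X^2(t)$ using the $(1+t)^{-1}$ pointwise decay, exactly as in \cite{OzawaSL} — this is the step I expect to be the main obstacle, namely organizing the hierarchy of norms so that each ``loss'' is absorbed by a spare power of decay rather than accumulating.

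For the pointwise bound $\LGN\vU\RGN{k-25,-1}(t)$ I would invoke the Klainerman--Sobolev inequality in 2D, $(1+t+|x|)\,|v(t,x)|\lesssim \sum_{|\beta|\le 2}\|\Gamma^\beta v(t,\cdot)\|_{L^2}$ (this is the source of the weight $(1+t+|x|)^{-1}$ in $\LGN\cdot\RGN{\cdot,-1}$), applied to $v=\Gamma^\alpha\vU$ with $|\alpha|\le k-25$; this immediately gives $\LGN\vU\RGN{k-25,-1}(t)\lesssim\LGNN\vU\RGNN{k-23}(t)\le\LGNN\vU\RGNN{k-9}(t)$, so the pointwise piece is controlled once the $H^{k-9}_\Gamma$ energy is. (If a genuine gain over the energy were needed one would use the sharper $L^\infty$ representation/decay estimate for the inhomogeneous Klein--Gordon equation with $(1+t)^{-1}$ decay, combined with the normal-form transformation that removes the non-resonant quadratic terms; but for the low-order norm in this lemma the Klainerman--Sobolev route suffices.) Assembling: $\LGN\vU\RGN{k-25,-1}(t)+\LGNN\vU\RGNN{k-9}(t)+\LGNN\pa\vU\RGNN{k-9}(t)\lesssim E_{k-9}(t)^{1/2}+E_{k-9}(0)^{1/2}$, and the energy inequality plus Gronwall, closed against $X^2(t)$ using the decay, yields $\le C(\|\vU_0\|_{\Hw^{k+1,k}}+X^2(t))$ with $C$ independent of $\delta$ and $t$, as claimed. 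Throughout, the hypotheses $\|\vU_0\|_{\Hw^{k+1,k}}\le1$ and $X(t)\le1$ are used only to ensure the symmetrizer $m=2(\sqrt{1+\rho}-1)$ and the coefficient matrices $A_{ij}(\vU)$ stay in a fixed compact set where all constants are uniform.
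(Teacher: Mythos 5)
Your proposal misses the device on which the paper's proof of this lemma entirely rests: Shatah's normal form transformation. The paper introduces $\vV=\vU+\vW$, with $\vW$ a bilinear expression in $(\vU,\pt\vU)$ whose kernels $Q^{ij}_k$ are chosen (by solving (\ref{eq:Q})) so that $\vV$ solves a Klein--Gordon system $(\ptt-\Delta+1)\vV=S$ with \emph{cubic and quartic} nonlinearity; Proposition \ref{prop:est} then gives $\LGNN S\RGNN{k-9}(s)\le C(1+s)^{-2+\sigma}(X^3+X^4)(s)$, which is integrable in time. Both halves of your argument fail without this step. For the $L^2$ part, the direct energy estimate on the quadratic system (\ref{ptt}) at order $k-9$ gives, as you yourself note, $\frac{d}{dt}E\le C\frac{X(t)}{1+t}E+\cdots$ and hence a Gronwall factor $(1+t)^{CX}$. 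That growth is acceptable for the top norm $\LGNN\vU\RGNN{k}$, which carries the weight $(1+t)^{-\sigma}$ in the definition (\ref{def:X}) of $X$ and is treated separately in Lemma \ref{lemenergy}, but the present lemma requires a bound uniform in $t$, and $(1+t)^{CX}$ is not $\le C(\|\vU_0\|_{\Hw^{k+1,k}}+X^2(t))$. You identify this as ``the main obstacle'' and defer its resolution to \cite{OzawaSL}; but the mechanism there (and here) is exactly the normal form you set aside: once the source is cubic, $\int_0^t\LGNN S\RGNN{k-9}\,ds\le C\int_0^t(1+s)^{-2+\sigma}(X^3+X^4)\,ds\le CX^2$ and no Gronwall exponent appears.

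The pointwise part of your argument contains a second, independent error. The inequality $(1+t+|x|)\,|v(t,x)|\le C\sum_{|\beta|\le2}\|\Gamma^\beta v(t,\cdot)\|_{L^2}$ is false in two space dimensions: the Klainerman--Sobolev inequality associated with the fields (\ref{def:vfields}) yields only the weight $(1+t+|x|)^{1/2}(1+|t-|x||)^{1/2}$, i.e.\ interior decay $(1+t)^{-1/2}$. The full $(1+t+|x|)^{-1}$ decay recorded in $\LGN\vU\RGN{k-25,-1}$ is not a fixed-time Sobolev embedding; it comes from Georgiev's estimate (Theorem \ref{thm:Georgiev}) for the \emph{inhomogeneous} Klein--Gordon equation, and even that estimate must be applied to the transformed equation for $\vV$: with a merely quadratic source the weighted norm $\|(1+s+|y|)\Gb S\|_{L^2_y}$ has no decay in $s$ and the dyadic sum $\sum_n\sup_s\phi_n(s)(\cdots)$ diverges logarithmically in $t$, whereas the cubic source supplies the extra factor $(1+s)^{-1}$ that makes $\sum_n\sup_s\phi_n(s)(1+s)^{-1}<5/2$. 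So the $(1+t)^{-1}$ pointwise bound, on which you also rely to close the energy estimate, cannot be obtained by the route you propose.
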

\begin{proof}
We start the proof with defining the bilinear form associated with
kernel $Q(y,z)$,
\[
\begin{split}
[G,Q,H](x):=&\int_{\mR^2\times\mR^2}G^T(y)Q(x-y,x-z)H(z)\,dydz\\
=&{1\over(2\pi)^4}\int_{\mR^2\times\mR^2}e^{i(\xi+\eta)\cdot
x}\hat{G}^T(\xi)\hat{Q}(\xi,\eta)\hat{H}(\eta)\,d\xi
d\eta.
\end{split}
\]
Here, $G(\cdot)$, $H(\cdot)$ are any $(2\times1)$-vector-valued
functions defined on $\mR^2$ and $Q(\cdot,\cdot)$ is $(2\times
2)$-matrix-valued distribution defined on $\mR^2\times\mR^2$.
Fourier transform is denoted with $\hat{\;\;}$ for both $\mR^2$ and
$\mR^2\times\mR^2$.

The same notation will be used for scalars
\[
[g,q,h](x):=\int_{\mR^2\times\mR^2}g(y)q(x-y,x-z)h(z)\,dydz.
\]

Then, we follow the construction of \cite{Shatah} to transform
(\ref{ptt}) in terms of the new variable
\be\label{def:V}\vV=(V_1,V_2,V_3)^T=\vU+\vW=\vU+(W_1,W_2,W_3)^T\ee
where
\be\label{def:vi}
W_k:=\sum_{i,j=1}^3\left[\bma U_i\\
\pt U_i\ema,Q^{ij}_k,\bma U_j\\
\pt U_j\ema\right],\quad k=1,2,3.
\ee
The kernels $Q^{ij}_k$ are to be determined later so that the new
variable $\vV$ satisfies a Klein-Gordon system with \emph{cubic}
nonlinearity for which estimate (\ref{Lestimate}) will be proved
using techniques from \cite{OzawaSL}, \cite{Georgiev}.

Without loss of generality, we will demonstrate the proof using
$V_1,U_1,W_1,Q^{ij}_1$ associated with the mass equation. From now
on, the subscript ``1'' is neglected for simplicity.

\emph{Step 1.} We claim that there exists kernels $Q^{ij}$ in
(\ref{def:vi}) such that $V=U+W$ satisfies the following
Klein-Gordon equation with cubic and quadruple nonlinearity,
\be\label{cubicKG}(\ptt-\Delta +1)V=S\ee where the
RHS
\be\label{cubicRHS}
\begin{split}S:=&\sum_{|\alpha|+|\beta|+|\gamma|\le4\atop
\max\{|\alpha|,|\beta|,|\gamma|\}\le3}\sum_{a,b,c=1}^3[\pa^\alpha
U_a\pa^\beta U_b,q_{\alpha\beta\gamma}^{abc},\pa^\gamma
U_c]\\&+\sum_{|\alpha|+|\beta|+|\gamma|+|\zeta|\le4\atop
\max\{|\alpha|,|\beta|,|\gamma|,|\zeta|\}\le3}\sum_{a,b,c,d=1}^3[\pa^\alpha
U_a\pa^\beta U_b,q_{\alpha\beta\gamma\zeta}^{abcd},\pa^\gamma
U_c\pa^\zeta U_d] \end{split}\ee with $q_{\alpha\beta\gamma}^{abc}$,
$q_{\alpha\beta\gamma\zeta}^{abcd}$ being linear combinations of the
entries of all $Q_k^{ij}$'s. Moreover, all the $Q_k^{ij}$'s  satisfy
the growth
condition\be\label{growth:Q}\left|D^N\hat{Q}(\xi,\eta)\right|\le
C_N(1+|\xi|^4+|\eta|^4)\ee for any nonnegative integer N.

Indeed, substitute $V$ on the LHS of (\ref{cubicKG}) with $V=U+W$
where $W$ is defined in (\ref{def:vi}) for $k=1$ and $U$ satisfies
the first equation of (\ref{ptt}),
\be\label{form:0}(\ptt-\Delta+1)V=(\ptt-\Delta+1)U+(\ptt-\Delta+1)W.\ee

Then, apply the normal form transform on each term of the RHS of the
above equation. By (\ref{ptt}), the
$U$ term amounts to
\be\label{form:1}(\ptt-\Delta+1)U=\sum_{i,j=1}^3\left[\bma U_i\\\pt
U_i\ema,P^{ij},\bma U_j\\\pt U_j\ema\right]\ee with
$\hat{P}^{ij}(\xi,\eta)$ being $2\times2$ matrices of polynomials
with degree less than or equal to 2. By (\ref{def:vi}), the $W$ term amounts to
\be\label{form:2}(\ptt-\Delta+1)W=\sum_{i,j=1}^3\left[\bma U_i\\\pt
U_i\ema,\cA Q^{ij},\bma U_j\\\pt U_j\ema\right]+S\ee with $S$
satisfying (\ref{cubicRHS}) and linear transform $\cA$ defined as
\be\label{form:3}\widehat{\cA
Q}(\xi,\eta):=\bma0&|\xi|^2+1\\-1&0\ema
\hat{Q}\bma0&-1\\|\eta|^2+1&0\ema+(2\xi\cdot\eta-1)\hat{Q}\ee

Combining (\ref{form:0}) --- (\ref{form:3}), we find that, for
proving (\ref{cubicKG}) --- (\ref{growth:Q}), it suffices to show
that there exist solutions $\hat{Q}^{ij}(\xi,\eta)$ to
\be\label{eq:Q}\hat{P}^{ij}(\xi,\eta)+\widehat{\cA
{Q}}^{ij}(\xi,\eta)\equiv0\ee that satisfy the growth condition
(\ref{growth:Q}). This part of the calculation only involves basic
Linear Algebra and Calculus so we neglect the details.

\emph{Step 2.} We apply the decay estimate of Georgiev in
\cite{Georgiev} to obtain the $L^\infty$ estimate for $V$ and
therefore $U$ with $(1+t)^{-1}$ decay in time.
\begin{theorem}[{\cite[Theorem 1]{Georgiev}}]\label{thm:Georgiev}Suppose $u(t,x)$
is a solution of\[(\ptt -\Delta +1)u=F(t,x).\]Then, for $t\ge0$, we
have
\[
\begin{split}|(1+t+|x|)u(t,x)|\le& C\sum_{n=0}^\infty\sum_{|\alpha|\le4}
\sup_{s\in(0,t)}\phi_n(s)\|(1+s+|y|)\Ga f(s,y)\|_{L^2_y}\\
&+C\sum_{n=0}^\infty\sum_{|\alpha|\le5}\|(1+|y|)\phi_n(y)\Ga
u(0,y)\|_{L^2_y}.
\end{split}
\]
Here, $\{\phi_n\}_{n=0}^\infty$ is a Littlewood-Paley partition of
unity,
\[\sum_{n=0}^\infty\phi_n(s)=1,\quad s\ge0;\quad\phi_n\in C^\infty_0(\mR),
\quad\phi_n\ge0\quad\mbox{for all }n\ge0\]
\[\mbox{supp}\,\phi_n=[2^{n-1},2^{n+1}]\quad\mbox{for }n\ge1,
\quad\mbox{supp}\,\phi_0\cap\mR_+=(0,2].\]
\end{theorem}

Apply $\Ga$ on the Klein-Gordon equation (\ref{cubicKG}) and use the
commutation properties of the vector fields to obtain \((\ptt
-\Delta +1)\Ga V=\Ga S\) so that by the above theorem,
\[
\begin{split}|(1+t+|x|)\Ga V(t,x)|\le& C\sum_{n=0}^\infty
\sum_{|\beta|\le|\alpha|+4}
\sup_{s\in(0,t)}\phi_n(s)\|(1+s+|y|)\Gb S(s,y)\|_{L^2_y}\\
&+C\sum_{n=0}^\infty\sum_{|\beta|\le|\alpha|+5}
\|(1+|y|)\phi_n(y)\Gb V(0,y)\|_{L^2_y}.
\end{split}
\]
By definition $V=U+W$, we immediately have estimates for
$(1+t+|x|)\Ga U$. After taking summation over all $\alpha$'s with
$|\alpha|\le k-25$, we arrive at \be\label{estimate:infty}
\begin{split}\LGN U\RGN{k-25,-1}(t)\le& \LGN W\RGN{k-25,-1}+
C\left(\sum_{n=0}^\infty\sum_{|\beta|\le k-21}
\sup_{s\in(0,t)}\phi_n(s)\|(1+s+|y|)\Gb S(s,y)\|_{L^2_y}\right.\\
&\left.+\|U(0,y)\|_{\Hw^{k+1,k}}+\sum_{|\beta|\le k-20}\|(1+|y|)\Gb
W(0,y)\|_{L^2_y}\right).\end{split} \ee

To obtain estimate on each term, we use the following proposition,
the proof of which is given in the Appendix.
\begin{proposition}\label{prop:est}
Let the scalar function $q:\mR^2\times\mR^2\mapsto\mR$ satisfy the
growth condition (\ref{growth:Q}) in terms of its Fourier transform.
Let $f(t,x)$, $g(t,x)$, $h(t,x)$ be functions with sufficient
regularity. Consider $p=\infty$ (respectively $p=2$). Then, at each
$t\ge0$, for $a=|\beta|+8$ (respectively $a=|\beta|+6$) and
$b=\left\lceil {|\beta|\over2}\right\rceil+7$,
\begin{align}\label{Estimate1prop}
\left\|(1+t+|x|)\Gb[f,q,g]\right\|_{L^p_x}\le& C\left(\LGNN
f\RGNN{a}\LGN g\RGN{b,-1}+\LGN f\RGN{b,-1}\LGNN g\RGNN{a}\right),
\end{align}
\begin{align}\label{Estimate2prop}
\begin{split}
\left\|(1+t+|x|)\Gb[f,q,gh]\right\|_{L^p_x}\le &C(1+t)^{-1}
\left(\LGNN f\RGNN{a}\LGN g\RGN{b,-1}\LGN h\RGN{b,-1}\right.\\
&+\left.\LGN f\RGN{b,-1}\left(\LGNN g\RGNN{a}\LGN
h\RGN{\lceil{a\over2}\rceil,-1} +\LGN
g\RGN{\lceil{a\over2}\rceil,-1}\LGNN h\RGNN{a}\right)\right).
\end{split}
\end{align}
\end{proposition}

Apply this proposition (with $p=\infty$) on the first term, and
(with $p=2$) on the second and fourth terms of the RHS of
(\ref{estimate:infty}) and use the definition of $W$ in
(\ref{def:vi}) and $S$ in (\ref{cubicKG}),
\[\begin{split}\LGN U\RGN{k-25,-1}(t)\le& CX^2(t)+C\sum_{n=0}^\infty
\sup_{s\in(0,t)}\phi_n(s)(1+s)^{-1}(X^3(t)+X^4(t))\\
&+ C\left(\|U_0\|_{\Hw^{k+1,k}}+\|\vU_0\|^2_{\Hw^{k+1,k}}\right).
\end{split}\]

We finish the $L^\infty$ estimate part of this Lemma
using the fact that \[\sum_{n=0}^\infty\sup_{s\in
(0,t)}\phi_n(s)(1+s)^{-1}<{5\over2}\]and the assumptions
$\|\vU_0\|_{\Hw^{k+1,k}}\le1$, $X(t)\le1$.

\emph{Step 3.} We obtain the $L^2$ estimate part regarding the terms
\(\LGNN\vU\RGNN{k-9}(t)+\LGNN\tpa \vU\RGNN{k-9}(t)\) using a very
similar approach as in Step 2. In fact, apply $\Ga$ on the
Klein-Gordon equation (\ref{cubicKG}) and use the commutation
properties of vector fields to obtain \((\ptt -\Delta +1)\Ga V=\Ga
S\). Then, we take the inner product of this equation with $\pt\Ga
V$, sum over all $\alpha$ with $|\alpha|\le k-9$ to obtain
\[
\begin{split}\LGNN V(t,x)\RGNN{k-9}+&\LGNN \pa V(t,x)\RGNN{k-9}\le
C\int_0^t\LGNN S(s,x)\RGNN{k-9}ds\\&+C\left(\LGNN V(0,x)\RGNN{k-9}
+\LGNN \pa V(0,x)\RGNN{k-9}\right).\end{split}
\]
Since $V=U+W$, we have
\[
\begin{split}\LGNN U(t,x)\RGNN{k-9}+&\LGNN \pa U(t,x)\RGNN{k-9}
\le\left(\LGNN W(t,x)\RGNN{k-9}+\LGNN \pa W(t,x)\RGNN{k-9}\right)\\&
+C\int_0^t\LGNN S(s,x)\RGNN{k-9}ds\\&+ C\left(\LGNN
U(0,x)\RGNN{k-9}+\LGNN \pa U(0,x)\RGNN{k-9}+
\LGNN W(0,x)\RGNN{k-9}+\LGNN \pa W(0,x)\RGNN{k-9}\right)\\
&=:I+II+III.\end{split}
\]

The estimate of the $I,II,III$ terms above follows closely to that of
(\ref{estimate:infty}), which evokes Proposition \ref{prop:est} repeatedly,
\[\begin{split}I\le&(1+t)^{-1}\sum_{|\beta|\le k-9}\left\|(1+t+|x|)\Gb W(t,x)
\right\|_{L^2_x}\\
\le&C(1+t)^{-1}\LGNN \vU(t,x)\RGNN{k}\LGN \vU(t,x)\RGN{k-25,-1}
\quad\mbox{ by (\ref{def:vi}) and Prop. \ref{prop:est}}\\
\le&C(1+t)^{-1+\sigma}X^2(t),\\
II\le&\int_0^t(1+s)^{-1}\sum_{|\beta|\le k-9}\left\|(1+s+|x|)\Gb
S(s,x)
\right\|_{L^2_x}ds\\
\le&C\int_0^t(1+s)^{-2}\LGNN \vU(s,x)\RGNN{k} \left(\LGN
\vU(s,x)\RGN{k-25,-1}^2
+\LGN \vU(s,x)\RGN{k-25,-1}^3\right)\\
&\phantom{C\int_0^t(1+s)^{-2+\sigma}\left(X^3(s)+X^4(s)\right)\,ds}
\quad\mbox{ by (\ref{cubicRHS}) and Prop. \ref{prop:est}}\\
\le&C\int_0^t(1+s)^{-2+\sigma}\left(X^3(s)+X^4(s)\right)\,ds,\\
III\le&C\left(\|U(0,y)\|_{\Hw^{k+1,k}}+\|\vU(0,y)\|^2_{\Hw^{k+1,k}}\right),
\end{split}
\]
where we use $k\geq 52$ and that $S$ contains at most third order
derivatives of $\vU$. These estimates shall finish the proof of
Lemma \ref{lemL} given assumptions 
$\|\vU_0\|_{\Hw^{k+1,k}}\le1$ and $X(t)\le1$.
\end{proof}

Note that in the above estimates for $I$ and $II$, we use the $k-th$
order norms to bound all lower order norms. In order to get the
global a priori estimate, we have to close the estimates for the
highest order norms. For the RSW system, this is achieved by the
energy estimates on the highest order Sobolev norms
$\LGNN\cdot\RGNN{k}$,
 where its symmetric structure shown in Lemma
\ref{symm} plays a crucial role.
\begin{lemma}\label{lemenergy}
Assume $\|A_{ij}(\vU)\|_{L^\infty}\leq 1/4$.
Then, the solution $\vU$ of (\ref{ptt}) satisfies
\begin{align*}\label{Enestimate}
(1+t)^{-\sigma}(\LGNN\vU\RGNN{k}(t)+\LGNN\tpa \vU\RGNN{k}(t))\leq
C(\|\vU_0\|_{H^{k+1,k}}+X^2(t))
\end{align*}
as long as the solution exists. Here, constant $C$ is independent of $\delta$ and $t$.
\end{lemma}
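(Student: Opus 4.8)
The plan is to perform a standard energy estimate on the symmetric quasilinear Klein-Gordon system \eqref{ptt}, but with the twist that we must control how the weighted $\Gamma$-derivatives interact with the quasilinear coefficients $A_{ij}(\vU)$, $A_{0j}(\vU)$. First I would apply $\Gamma^\alpha$ for each $|\alpha|\le k$ to \eqref{ptt}. Using the commutation relations of the vector fields $\Gamma$ with the d'Alembertian-plus-one operator $\ptt-\Delta+1$ (they commute, up to linear combinations of the same fields), the left side becomes $(\ptt-\Delta+1)\Gamma^\alpha\vU$, while the right side produces the ``good'' leading term $\sum A_{ij}(\vU)\pa_{ij}\Gamma^\alpha\vU+\sum A_{0j}(\vU)\pa_{0j}\Gamma^\alpha\vU$ plus a commutator remainder and the $\Gamma^\alpha$-differentiated version of $R(\vUd\otimes\vUd)$. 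The commutator remainder is a sum of terms in which at least one $\Gamma$ falls on a coefficient $A$ (hence on $\vU$ itself), so each such term is at worst a product of $\Gamma$-derivatives of $\vU$ of total order $\le k$, with the highest-order factor appearing \emph{linearly} and multiplied by a lower-order ($L^\infty$-controlled) factor.

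Next I would form the natural energy functional
\[
E_\alpha(t):=\half\int_{\mR^2}\Bigl(|\pt\Gamma^\alpha\vU|^2+|\nabla\Gamma^\alpha\vU|^2+|\Gamma^\alpha\vU|^2-\sum_{i,j}(A_{ij}(\vU)\pa_i\Gamma^\alpha\vU)\cdot\pa_j\Gamma^\alpha\vU\Bigr)\,dx,
\]
which, thanks to the symmetry $A_{ij}=A_{ji}$ from Lemma \ref{symm} and the hypothesis $\|A_{ij}(\vU)\|_{L^\infty}\le 1/4$, is equivalent to $\|\Gamma^\alpha\vU\|_{L^2}^2+\|\pt\Gamma^\alpha\vU\|_{L^2}^2$. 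Differentiating $E_\alpha$ in $t$, taking the $L^2$ inner product of the $\Gamma^\alpha$-equation with $\pt\Gamma^\alpha\vU$, and integrating by parts, the second-order quasilinear terms are absorbed into $\tfrac{d}{dt}E_\alpha$ up to terms of the form $\int(\pt A_{ij})(\pa_i\Gamma^\alpha\vU)(\pa_j\Gamma^\alpha\vU)$ and $\int(\pa_i A_{ij})(\cdots)$; the $A_{0j}$ term is handled the same way after one integration by parts in $t$ or $x_j$. Summing over $|\alpha|\le k$ and writing $E_k:=\sum_{|\alpha|\le k}E_\alpha$, I get a differential inequality of the shape
\[
\frac{d}{dt}E_k(t)\;\le\; C\,\bigl(\LGN\vU\RGN{[k/2]+c,\,0}(t)+\LGN\partial\vU\RGN{[k/2]+c,\,0}(t)\bigr)\,E_k(t)\;+\;(\text{lower-order source terms}),
\]
where the coefficient multiplying $E_k$ comes from the $L^\infty$ norms of at-most-half-order $\Gamma$-derivatives of $\vU$ and $\pt\vU$, and the source terms come from the commutator remainder and from $\Gamma^\alpha R(\vUd\otimes\vUd)$ — each of these is, by the product/Leibniz rule and the Klainerman–Sobolev-type bookkeeping, bounded by a product of one factor carrying $\le k$ derivatives in $L^2$ and factors carrying $\lesssim k/2$ derivatives in $L^\infty$, i.e. by $X(t)\cdot\bigl(\LGNN\vU\RGNN{k}+\LGNN\pt\vU\RGNN{k}\bigr)$.

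The crucial quantitative point, and the main obstacle, is that the decay exponent we can extract for the coefficient of $E_k$ is only $(1+t)^{-1}$ — the Klein-Gordon decay rate in 2D — which is \emph{not} integrable, so naive Gr\"onwall gives exponential growth. This is exactly why the definition \eqref{def:X} of $X(t)$ allows the top-order norms to grow like $(1+t)^{\sigma}$ with $\sigma\in(0,1/2)$. I would therefore feed in the $L^\infty$ bound from the low-norm part of $X$, namely $\LGN\vU\RGN{k-25,-1}(t)\le X(t)$ (noting $k-25\ge[k/2]+c$ since $k\ge52$), which supplies the extra factor $(1+t)^{-1}$; the differential inequality becomes $\frac{d}{dt}E_k\le \tfrac{C X(t)}{1+t}E_k + (\text{integrable-in-time source bounded by }X(t)^2\text{-type quantities scaled by }(1+t)^{\sigma})$. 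Integrating and using $X(t)\le 1$ so that $CX(t)\le\sigma$ after the smallness already built into the hypotheses — more precisely, using $e^{\int_0^t \frac{CX(s)}{1+s}ds}\le(1+t)^{CX(t)}\le(1+t)^{\sigma}$ — yields
\[
\LGNN\vU\RGNN{k}(t)+\LGNN\pt\vU\RGNN{k}(t)\;\le\;C(1+t)^{\sigma}\bigl(\|\vU_0\|_{\Hw^{k+1,k}}+X^2(t)\bigr),
\]
which is the claimed estimate after multiplying through by $(1+t)^{-\sigma}$. The only genuinely delicate points are (i) checking that every commutator and every term of $\Gamma^\alpha R(\vUd\otimes\vUd)$ really does have the ``one high factor, rest low'' structure so that the $L^\infty$ factors are at order $\le k-25$ (this is where $k\ge52$ is used, giving room $k-25 > k/2$), and (ii) making sure the $(1+t)^{\sigma}$-weighted source terms integrate against the Gr\"onwall factor to again give something $O((1+t)^{\sigma})$ rather than worse — which works because the source carries an extra genuine $(1+t)^{-1}$ (or better) from the cubic-in-$\vU$ nature after one uses the $L^\infty$ decay, making the time integrand $O((1+s)^{-1+\sigma}X^2(s))$ or better.
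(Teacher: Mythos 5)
Your overall strategy matches the paper's: the modified energy functional exploiting the symmetry $A_{ij}=A_{ji}$, commutation of $\Gamma$ with $\ptt-\Delta+1$, absorption of the quasilinear second-order terms into $\tfrac{d}{dt}E_\alpha$, and a differential inequality whose coefficient is a low-order $L^\infty$ norm carrying the $(1+t)^{-1}$ decay. (Two small points: the quasilinear correction in the energy should enter with a $+$ sign, as in the paper's $\gaE(t)$ --- with your $-$ sign the top-order cross terms $\lang A_{ij}\pa_i\Ga\vU,\pa_0\pa_j\Ga\vU\rang$ do not cancel; and the $A_{0j}$ term is handled by integrating by parts in $x_j$ only, using the symmetry of $A_{0j}$.)

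The genuine gap is in the final integration. You close with Gr\"onwall, which produces the factor $\exp\bigl(\int_0^t CX(s)(1+s)^{-1}ds\bigr)\le(1+t)^{CX(t)}$, and you then assert that $X(t)\le1$ gives $CX(t)\le\sigma$. That implication is false: $C$ is the (possibly large) universal constant coming from the commutator and product estimates, and nothing in the hypotheses of the lemma --- which assume only $\|A_{ij}(\vU)\|_{L^\infty}\le 1/4$ --- forces $CX(t)\le\sigma$. Without that bound the factor $(1+t)^{CX(t)}$ survives, the constant in your conclusion depends on $t$, and the quadratic structure $C(\|\vU_0\|+X^2(t))$ that the continuation argument in Theorem \ref{KGsystem} relies on is lost. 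The paper avoids Gr\"onwall altogether: from $\pa_t\sqrt{\gaE}\le C(1+s)^{-1}\LGN\vU\RGN{k-25,-1}\cdot(1+s)^{\sigma}\cdot(1+s)^{-\sigma}\sqrt{\gaE(s)}$ it bounds \emph{both} factors on the right by components of $X(s)$ (the second via the equivalence (\ref{MEE}) and the $(1+s)^{-\sigma}$-weighted top norm built into the definition (\ref{def:X})), obtaining $\pa_t\sqrt{\gaE}\le C(1+s)^{-1+\sigma}X^2(s)$, which integrates directly to $C(1+t)^{\sigma}X^2(t)$ with a universal constant. Your argument can be repaired either by adopting this substitution or by adding the hypothesis $CX(t)\le\sigma/2$ to the lemma and propagating it through the bootstrap by shrinking $\delta_0$, but as written the Gr\"onwall step does not deliver the stated estimate.
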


\begin{proof}
The proof of this lemma combines the ideas in \cite{Hormander,
OzawaSL, Fang} for energy estimates for the Klein-Gordon equations
together.

Define an energy functional 
\begin{align*} \gaE(t):=&\frac{1}{2}\sum_{|\alpha|\leq k} (\|\pa_t\Ga
\vU\|_{L^2}^2+\|\nabla \Ga \vU\|_{L^2}^2+\|\Ga\vU\|_{L^2}^2)(t)\\
&+\frac{1}{2}\sum_{|\alpha|\leq k}
\sum_{i,j=1}^2\lang A_{ij}(\vU)\pa_i\Ga \vU, \pa_j\Ga\vu\rang(t),
\end{align*} where $\langle, \rangle$ is the $L^2$ inner product defined by
$\lang f, g\rang=\int_{\mR^2}f^Tgdx$.
Clearly, by the commutation property of $\Gamma$, $\pa$ and the assumption $\|A_{ij}(\vU)\|_{L^\infty}\leq
\frac{1}{4}$, we have \be\label{MEE} C_1\sqrt{\gaE(t)}\leq \LGNN\vU\RGNN{k}(t)+\LGNN\tpa \vU\RGNN{k}(t)\leq C_2\sqrt{\gaE(t)}, \ee which ensures the equivalence
of $\sqrt{\gaE(t)}$ and $\LGNN\vU\RGNN{k}(t)+\LGNN\tpa \vU\RGNN{k}(t)$.

Applying $\Gamma^{\alpha}$ to (\ref{ptt})  and taking the $L^2$
inner product on  the resulting system with $\pt\Gamma^{\alpha}\vU$
i.e. $\po\Gamma^{\alpha}\vU$ , it follows from Leibniz's rule that
\begin{align*}
\lang(\ptt-\Delta+1)\Gamma^{\alpha}\vU, \pt\Gamma^{\alpha}\vU\rang=
&\sum_{i,j=1}^2\lang
A_{ij}(\vU)\pa_{ij}\Gamma^{\alpha}\vU,
\po\Gamma^{\alpha}\vU\rang \\+\sum_{j=1}^2\lang A_{0j}
\pa_{0j}\Gamma^{\alpha}\vU,
\po\Gamma^{\alpha}\vU\rang&+\sum_{|\beta|+|\gamma|\le |\alpha|}\lang
R^\bega(\Gb\vUd\otimes\Gg\vUd),
\po\Ga\vU \rang,
\end{align*}
where all $R^\bega$'s are linear functions. Here
$\vUd=(\vU^T,\pt\vU^T,\px\vU^T,\py\vU^T)$.

Upon integrating by parts, one has
\begin{equation}\label{Highenergyest}
\begin{split}
&\frac{1}{2}\pa_t(\|\pa_t\Ga \vU\|_{L^2}^2+\|\nabla \Ga \vU\|_{L^2}^2
+\|\Ga\vU\|_{L^2}^2)\\
=&-\sum_{i,j=1}^2 \lang
A_{ij}\pa_i\Ga\vU,\pa_0\pa_j\Ga\vU\rang -\lang
\pa_jA_{ij}(\vU)\pa_i\Ga\vU,\pa_0\Ga\vU\rang\\
& -\frac{1}{2}\sum_{j=1}^2\lang\pa_j
A_{0j}\pa_0\Ga\vU,\pa_0\Ga\vU\rang +\sum_{|\beta|+|\gamma|\le
|\alpha|}\lang R^\bega(\Gb\vUd\otimes\Gg\vUd),
\po\Ga\vU\rang
\end{split}
\end{equation}
and the first terms on the RHS above\begin{align*} \lang
A_{ij}\pa_i\Ga\vU,\pa_0\pa_j\Ga\vU\rang=&-\frac{1}{2}\pa_t\lang
A_{ij}\pa_i\Ga\vU,\pa_j\Ga\vU\rang+\frac{1}{2}\lang \pa_0A_{ij}
\pa_i\Ga\vU,\pa_j\Ga\vU\rang.
\end{align*}
The equations above hold because $A_{ij}$ and $A_{0j}$ are symmetric
matrices.  Summing for all indices $\alpha$ with $|\alpha|\leq k$ in
the above equations and using the fact that $\min\{|\beta|,|\gamma|\}\le k/2<k-25$ in the last terms on the RHS of
(\ref{Highenergyest}), one obtain
\[\pa_t \gaE(t)\leq C\LGN\vU\RGN{k-25,0}\gaE(t).\]
Thus,
\[
\pa_t \sqrt{\gaE(t)}\leq C(1+t)^{-1}\LGN\vU\RGN{k-25,-1}(1+t)^\sigma
(1+t)^{-\sigma}\sqrt{\gaE(t)}.
\]

By the virtue of (\ref{MEE}) and the definition of $X(t)$ in (\ref{def:X}), we have
\begin{align*}
\LGNN\vU\RGNN{k}+\LGNN\tpa \vU\RGNN{k}\leq &C\left(\sqrt{\gaE(t)}-\sqrt{\gaE(0)}\right)+C\sqrt{\gaE(0)}\\
\leq&
C\int_0^t(1+s)^{-1}{X(s)}(1+s)^{\sigma}
X(s)ds +C\|\vU_0\|_{H^{k+1,k}}\\
\leq& C(1+t)^{\sigma}X^2(t)+C\|\vU_0\|_{H^{k+1,k}},
\end{align*}
which finishes the proof of Lemma \ref{lemenergy}.
\end{proof}

The proofs of these lemmas also help reveal the asymptotic behavior of $U$ in the following theorem
\begin{theorem}\label{asymTh}
Under the same assumptions as in Theorem \ref{KGsystem}, there exists a free solution $\vU^+$
such that \be\label{KGasym} 
\|\vU(t,\cdot)-\vU^+(t,\cdot)\|_{H^{k-15}}+\|\pa_t\vU(t,\cdot)-\pa_t\vU^+(t,\cdot)\|_{H^{k-16}}\leq C(1+t)^{-1}, \ee where
\be\label{free:U}\vU^+(t,\cdot):=\cos((1-\Delta)^{1/2}
t)\vU_0^++(1-\Delta)^{-1/2}\sin((1-\Delta)^{1/2}
t)\vU_1^+\ee for some initial data $\vU_{0}^+\in H^{k-15}$ and $\vU_1^+\in H^{k-16}$.
\end{theorem}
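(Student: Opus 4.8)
The plan is to deduce the scattering statement for $\vU$ from the corresponding statement for the normal-form variable $\vV=\vU+\vW$ introduced in the proof of Lemma~\ref{lemL}, which solves the \emph{cubic}-plus-quartic Klein--Gordon system $(\ptt-\Delta+1)\vV=\vS$ with $\vS$ as in (\ref{cubicRHS}). Write $\omega:=(1-\Delta)^{1/2}$ and pass componentwise to the complexified unknown $v:=\pt\vV+i\omega\vV$; it satisfies $\pt v-i\omega v=\vS$, equivalently $\pt\!\left(e^{-i\omega t}v\right)=e^{-i\omega t}\vS$. Since $\vU$ is real-valued and $\vW$ is a real bilinear form of $\vU$, the variable $\vV$ is real, so $\vV=\omega^{-1}\,\mathrm{Im}\,v$ and $\pt\vV=\mathrm{Re}\,v$.

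The first real step is to record the time decay of the source. Each term of $\vS$ is a tri- or quadrilinear form $[\pa^\alpha U_a\pa^\beta U_b,q,\pa^\gamma U_c]$ (or its quartic analogue) with $\max\{|\alpha|,|\beta|,|\gamma|\}\le3$ and kernels obeying the growth bound (\ref{growth:Q}). In estimating $\|\vS(t)\|_{H^{k-16}}$ by a standard Moser-type product inequality, the $k-16$ derivatives from the norm, the at most four supplied by the kernel, and the at most three intrinsic ones can be arranged so that the $L^2$ factor carries at most $k-9$ of them while every other factor carries at most $7$; since $k\ge52$, the former is then bounded by the uniform norm $\LGNN\vU\RGNN{k-9}(t)\le X(t)$ and the latter by $(1+t)^{-1}\LGN\vU\RGN{k-25,-1}(t)\le(1+t)^{-1}X(t)$. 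Invoking the a~priori estimate $X(t)\le C\delta$ of Theorem~\ref{KGsystem},
\[
\|\vS(t)\|_{H^{k-16}}\le C(1+t)^{-2}\left(X^3(t)+X^4(t)\right)\le C\delta^3(1+t)^{-2},
\]
so $\vS\in L^1\!\left((0,\infty);H^{k-16}\right)$.

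Next, set $v^+:=v(0)+\int_0^\infty e^{-i\omega s}\vS(s)\,ds$. This lies in $H^{k-16}$, since $v(0)=\pt\vV(0)+i\omega\vV(0)\in H^k$ (because $\vU_0\in\Hw^{k+1,k}$ and $\vW(0),\pt\vW(0)$ are smooth quadratic expressions of the initial data, cf.\ Theorem~\ref{LocalTh}) and the integral converges in $H^{k-16}$ by the bound above. Integrating $\pt(e^{-i\omega t}v)=e^{-i\omega t}\vS$ gives $e^{-i\omega t}v(t)-v^+=-\int_t^\infty e^{-i\omega s}\vS(s)\,ds$, hence $\|e^{-i\omega t}v(t)-v^+\|_{H^{k-16}}\le\int_t^\infty\|\vS(s)\|_{H^{k-16}}\,ds\le C\delta^3(1+t)^{-1}$. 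Define the free solution $\vV^+(t):=\omega^{-1}\,\mathrm{Im}\!\left(e^{i\omega t}v^+\right)$, which is exactly of the form (\ref{free:U}) with $\vU_0^+:=\omega^{-1}\,\mathrm{Im}\,v^+\in H^{k-15}$ and $\vU_1^+:=\mathrm{Re}\,v^+\in H^{k-16}$. From $\vV-\vV^+=\omega^{-1}\,\mathrm{Im}\!\left(e^{i\omega t}(e^{-i\omega t}v-v^+)\right)$ and $\pt\vV-\pt\vV^+=\mathrm{Re}\!\left(e^{i\omega t}(e^{-i\omega t}v-v^+)\right)$, using that $\omega^{-1}$ gains one derivative and $e^{\pm i\omega t}$ is unitary on every $H^s$, we obtain
\[
\|\vV(t)-\vV^+(t)\|_{H^{k-15}}+\|\pt\vV(t)-\pt\vV^+(t)\|_{H^{k-16}}\le C\delta^3(1+t)^{-1}.
\]

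It remains to return to $\vU=\vV-\vW$. Repeating the derivative count above for the quadratic form $\vW$ and for $\pt\vW$ (whose worst term merely replaces a factor $U$ by $\ptt U$, still of order $\le2\le k-25$ in $L^\infty$) gives $\|\vW(t)\|_{H^{k-15}}+\|\pt\vW(t)\|_{H^{k-16}}\le C(1+t)^{-1}X^2(t)\le C\delta^2(1+t)^{-1}$. Taking $\vU^+:=\vV^+$ and combining the last two displays yields (\ref{KGasym}). The one genuinely substantive step is the bookkeeping that forces $\|\vS(t)\|_{H^{k-16}}$ to decay like $(1+t)^{-2}$ using only norms controlled uniformly by $X(t)$; this works precisely because the normal-form transformation of Step~1 of Lemma~\ref{lemL} has removed every quadratic term from the equation for $\vV$, so that $\vS$ is at least cubic and the resulting $t^{-2}$ integrand is integrable. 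Everything else is the classical linear Klein--Gordon scattering argument, and in that sense the theorem is a corollary of the estimates already established for Lemmas~\ref{lemL} and~\ref{lemenergy}.
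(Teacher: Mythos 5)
Your argument is correct and follows essentially the same route as the paper: transfer to the normal-form variable $\vV=\vU+\vW$, use the a priori bound $X(t)\le C\delta$ together with the trilinear estimates to get $\|\vS(t)\|\le C(1+t)^{-2}$, define the scattering data by integrating the source to infinity, and absorb $\vW$ using its quadratic decay. Your complexified half--Klein--Gordon formulation $v=\pt\vV+i(1-\Delta)^{1/2}\vV$ is just an equivalent repackaging of the Duhamel representation the paper writes with $\cos$ and $\sin$.
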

\begin{proof}
Recall the definitions of $\vV$ in (\ref{def:V}), $\vW$ in (\ref{def:vi}) and $S$ in (\ref{cubicKG}), (\ref{cubicRHS}). Without loss of generality, switch the notations in (\ref{cubicKG}) to boldface $\vV$ and $\vS$ while cubic nonlinearity of $\vS$ remains valid. Then, Theorem \ref{KGsystem} and Proposition \ref{prop:est} imply that
\be\label{est:S}
   \|\vS(s,\cdot)\|_{H^{k-15}}\leq C(1+s)^{-2}.
   \ee
   Therefore, we can define
\be\label{free:U0}
 \vU^+_{0}:=\vV(0,\cdot)-\int_0^{\infty}(1-\Delta)^{-1/2}\sin
 ((1-\Delta)^{1/2}s)\vS(s,\cdot) ds\in H^{k-15}
 \ee
and
\be\label{free:U1}
 \vU^+_{1}:=\pa_t\vV(0,\cdot)+\int_0^{\infty}\cos
 ((1-\Delta)^{1/2}s)\vS(s,\cdot) ds\in H^{k-16}.
 \ee
 
Then, apply the Duhamel's principle on (\ref{cubicKG}),
\begin{align*} \vV(t,\cdot) =&\cos((1-\Delta)^{1/2}
t)\vV(0,\cdot)+(1-\Delta)^{-1/2}\sin((1-\Delta)^{1/2}
t)\pa_t\vV(0,\cdot)\\
&+\int_0^t(1-\Delta)^{-1/2}\sin((1-\Delta)^{1/2}(t-s))\vS(s,\cdot)ds,
\end{align*}
and combine it with (\ref{free:U}), (\ref{free:U0}), (\ref{free:U1}) so that
we have
\begin{align*}
\vV(t,\cdot)-\vU^+(t,\cdot)=&\int_t^{\infty}(1-\Delta)^{-1/2}\sin
 ((1-\Delta)^{1/2}(t-s))\vS(t,\cdot)ds.
\end{align*}
Therefore, by (\ref{est:S}), \[\|\vV(t,\cdot)-\vU^+(t,\cdot)\|_{H^{k-15}}+\|\pt\vV(t,\cdot)-\pt\vU^+(t,\cdot)\|_{H^{k-16}}\le C(1+t)^{-1}\]

Finally, apply Theorem \ref{KGsystem} and Proposition \ref{prop:est} to $\vW$ defined in (\ref{def:vi}) and arrive at
\[
   \|\vW(t,\cdot)\|_{H^{k-15}}+\|\pt\vW(t,\cdot)\|_{H^{k-16}}\leq C(1+t)^{-1}.
   \]
We then conclude that $\vU-\vU^+=\vV-\vU^+-\vW$ also decays like $C(1+t)^{-1}$ as in (\ref{KGasym}).
\end{proof}

\section{Lifespan of Classical Solutions with General Initial Data}\label{sec:general}
The proof of Theorem \ref{perTh} combines standard energy methods with the sharp estimates from previous section, in particular the $(1+t)^{-1}$ decay rate of $L^\infty$ norms.

We start with extracting the zero-relative-vorticity part $(\rhoK_0,\vuK_0)$ from the general initial data $(\rho_0,\vu_0)$ --- supscript ``$K$'' here stands for Klein-Gordon. This can be achieved by $L^2$ projection of $(\rho_0,\vu_0)$ onto the function space with zero relative vorticity but we find it easier to use the complementary projection,
\begin{align*}\rho_0-\rhoK_0:=&(\Delta-1)^{-1}(\pa_1
u_{2,0}-\pa_2u_{1,0}-\rho_0),\\
\vu_0-\vuK_0:=&\left(\begin{matrix}-\pa_2\\ \pa_1\end{matrix}\right)(\rho_0-\rhoK_0).\end{align*}

Using the notations from Theorem \ref{perTh}, we have
\begin{align}\label{size:E0}\|(\rho_0,\vu_0)-(\rhoK_0,\vuK_0)\|_{H^3}\le& C\|\pa_1
u_{2,0}-\pa_2u_{1,0}-\rho_0\|_{H^2}=C\ep,\\
\label{size:K0}\|(\rhoK_0,\vuK_0)\|_{H^{k+1,k}}\le& C\|(\rho_0,\vu_0)\|_{H^{k+1,k}}=C\delta.
\end{align}

Let $m^{K}:=2(\sqrt{1+\rho^K}-1)$, then $\vU^{K}:=(m^K, u_1^K, u_2^K)^T$
solves the symmetrized RSW system (\ref{RSWm}), (\ref{RSWum}) as well as the Klein-Gordon system (\ref{ptt}). By choosing $\delta$ in (\ref{size:K0}) to be sufficiently small, we have $\vU^K_0$ satisfy the assumptions in
Theorem \ref{KGsystem}, so that there exists a unique global solution
$\vU^K$ to (\ref{RSWm}) and (\ref{RSWm})
associated with the initial data $\vU_0^K$. In addition, the following estimate holds true\be\label{Irest}
 |\vU^{K}|_{W^{k-25,\infty}}\leq \frac{C\delta}{1+t}.
\ee

Now it remains to estimate the difference $\vE:=\vU-\vU^K$. To this end, we write the symmetrized RSW system (\ref{RSWm}), (\ref{RSWum}) into the following compact form \be\label{cptfeq}
\pa_t\vU+\sum_{j=1}^2B_j(\vU)\pa_j\vU=\cL(\vU), \ee with symmetric matrices
\[ B_j(\vU):=u_jI+\frac{1}{2}m J_j\]
and $J_j$, $\cL$ defined in (\ref{defJL}).

Since both $\vU=\vU^K+\vE$ and $\vU^{K}$ satisfy the same system (\ref{cptfeq}), straightforward calculation shows that $\vE$ satisfies
\[\pa_t\vE+\sum_{j=1}^2B_j(\vE)\pa_j\vE+\sum_{j=1}^2B_j(\vU^K)\pa_j\vE+\sum_{j=1}^2B_j(\vE)\pa_j\vU^K=\cL(\vE),\]
subject to initial data $\vE_0=(2\sqrt{1+\rho_0}-2\sqrt{1+\rhoK_0}, \vu_0-\vuK_0)$.

Employing the standard energy method, we have $e(t):=\|\vE(t,\cdot)\|_{H^3}$ satisfy an energy inequality,
\[e'(t)\le Ce(t)(|\nabla\vE|_{L^\infty}+|\vU^K|_{W^{4,\infty}}).\]
We then use Sobolev inequalities and estimate (\ref{Irest}) to further the above inequality as
\[e'(t)\le Ce(t)\left(e(t)+{\delta\over1+t}\right).\]
Divide it with $e^2(t)$,
\[\left(-e^{-1}(t)\right)'\le C\left(1+{\delta\over1+t}e^{-1}(t)\right)\]which is linear in terms of $e^{-1}(t)$. We finally arrive at
\[e(t)\le(1+t)^{C\delta}\left(e^{-1}(0)-{C\over 1+C\delta}\left[(1+t)^{1+C\delta}-1\right]\right)^{-1}.\]

By (\ref{size:E0}), the initial value is bounded by $e(0)\le C\ep$. Thus, $e(t)$ remains bounded as long as\[t<\left({1+C\delta\over C\veps}+1\right)^{1\over1+C\delta}-1\]
which is of the same order as (\ref{span:general}) in Theorem \ref{perTh} under the smallness assumption on $\delta$ and the fact that $\veps\le\delta$.

\section{Appendix: Proof of Proposition \ref{prop:est}}
We first claim the following estimate on kernel $q(y,z)$.
\begin{proposition}\label{prop:kernel}Let $q(y,z)$ satisfy the growth
condition (\ref{growth:Q}), i.e.
\be\label{growth:q}|D^N\hat{q}(\xi,\eta)|\le
C_N(1+|\xi|^4+|\eta|^4)\ee for any $N\geq 0$. Then, for
\be\label{def:q1}q_1:=(1-\Delta_y)^{-3}(1-\Delta_z)^{-3}q(y,z)\ee
the following estimate holds true
\[
(1+|y|+|z|)^lq_1(y,z)\in L^1(\mR^2_y\times\mR^2_z),
\]
for any $l\geq 0$.
 \end{proposition}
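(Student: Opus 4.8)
The plan is to reduce the claim to a pointwise bound on $q_1$ and its weighted versions that is integrable in $\mathbb{R}^2_y\times\mathbb{R}^2_z$. The key observation is that the growth condition (\ref{growth:q}) says $\hat{q}$ is, up to the polynomial factor $(1+|\xi|^4+|\eta|^4)$, a bounded symbol with all derivatives bounded by the same factor; dividing by $(1+|\xi|^2)^3(1+|\eta|^2)^3$ kills that growth. Precisely, set $p(\xi,\eta):=(1+|\xi|^2)^{-3}(1+|\eta|^2)^{-3}\hat{q}(\xi,\eta)$, so that $\hat{q_1}=p$ by (\ref{def:q1}), since $(1-\Delta_y)^{-3}$ and $(1-\Delta_z)^{-3}$ act as Fourier multipliers by $(1+|\xi|^2)^{-3}$ and $(1+|\eta|^2)^{-3}$ respectively. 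Then (\ref{growth:q}) gives $|D^N_{\xi,\eta}p(\xi,\eta)|\le C_N(1+|\xi|^2)^{-1}(1+|\eta|^2)^{-1}$ for every $N$ — one differentiates the product by Leibniz, and each derivative either hits $\hat{q}$ (gaining the factor $1+|\xi|^4+|\eta|^4$ controlled by (\ref{growth:q})) or the smooth decaying prefactor, and in all cases the worst term is absorbed by two of the six available powers in the denominator, leaving at least $(1+|\xi|^2)^{-1}(1+|\eta|^2)^{-1}$.

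Next I would convert this symbol estimate into decay of $q_1$ in physical space. Since $p$ is a Schwartz-class-like symbol (smooth, with all derivatives decaying), $q_1=\check{p}$ is smooth and rapidly decaying; but to control the weight $(1+|y|+|z|)^l$ I only need polynomial decay of a fixed order. For any multi-index $\mu$ in $(y,z)$ with $|\mu|\le M$ one has $(y,z)^\mu q_1(y,z)=c\,\widehat{D^\mu p}^{\,\vee}(y,z)$ (inverse Fourier transform), and by the bound on $D^\mu p$ together with $L^1$-boundedness of symbols whose first $5$ derivatives decay like $(1+|\xi|^2)^{-1}(1+|\eta|^2)^{-1}$ (a standard non-stationary-phase / integration-by-parts argument, or simply noting that such a symbol lies in $L^1(\mathbb{R}^2\times\mathbb{R}^2)$ and so has bounded inverse transform after two more derivatives), one obtains $|(y,z)^\mu q_1(y,z)|\le C_\mu$ uniformly. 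Combining over $|\mu|\le M$ and then trading $M$ large (say $M=l+5$) yields
\[
|q_1(y,z)|\le C_l\,(1+|y|+|z|)^{-(l+5)}.
\]
Multiplying by $(1+|y|+|z|)^l$ leaves a function decaying like $(1+|y|+|z|)^{-5}$, which is integrable on $\mathbb{R}^2_y\times\mathbb{R}^2_z$ (a four-dimensional space, so any decay faster than $(1+|y|+|z|)^{-4}$ suffices), giving the claim.

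The main obstacle is the bookkeeping in passing from the symbol bound to the pointwise spatial decay: one must verify that $(1+|\xi|^2)^{-1}(1+|\eta|^2)^{-1}$ decay of $p$ and sufficiently many derivatives genuinely forces polynomial decay of $q_1$ of a prescribed order, uniformly, and that the number of derivatives needed is finite and explicit — here roughly $l+5$. This is a routine but slightly delicate Fourier-analysis argument (essentially: $\widehat{\,(y,z)^\mu q_1}=D^\mu p$ plus the fact that an $L^1$ function in four variables whose derivatives up to order $5$ are also $L^1$ has a continuous, bounded, and in fact decaying inverse transform). Everything else — the reduction $\hat{q_1}=p$, the Leibniz estimate on $p$, and the final integrability count in $4$ dimensions — is immediate from the hypothesis (\ref{growth:q}) and the definition (\ref{def:q1}).
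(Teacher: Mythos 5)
Your reduction to the symbol $p=\hat q_1$ with $|D^N p|\le C_N(1+|\xi|^2)^{-1}(1+|\eta|^2)^{-1}$ is the paper's first step as well, and your final integrability count in four dimensions is fine; the gap is in the middle step, where you convert the symbol bound into the uniform pointwise estimate $|(y,z)^\mu q_1|\le C_\mu$ and hence $|q_1(y,z)|\le C_l(1+|y|+|z|)^{-(l+5)}$. The function $(1+|\xi|^2)^{-1}(1+|\eta|^2)^{-1}$ is \emph{not} in $L^1(\mR^2_\xi\times\mR^2_\eta)$: already $\int_{\mR^2}(1+|\xi|^2)^{-1}\,d\xi$ diverges logarithmically, so the ``simply noting that such a symbol lies in $L^1$'' route is unavailable, and integration by parts in $(\xi,\eta)$ only buys decay of $q_1$ away from the origin, not boundedness at it. In fact the pointwise bound you want is false in general: take $\hat q(\xi,\eta)=1+|\xi|^4$, which satisfies (\ref{growth:q}); then $\hat q_1$ contains the term $(1+|\xi|^2)^{-1}(1+|\eta|^2)^{-3}$, whose inverse transform in $\xi$ is the two-dimensional Bessel kernel $(1-\Delta_y)^{-1}\delta$, blowing up like $\log(1/|y|)$ as $y\to 0$. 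So $q_1$ need not be bounded, and no estimate of the form $|q_1(y,z)|\le C(1+|y|+|z|)^{-(l+5)}$ can hold.

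The singularity is only logarithmic and locally integrable, so your strategy could be repaired by splitting into a near piece (handled in $L^2$ or $L^{1}_{loc}$) and a far piece (handled by your integration by parts), but the cleaner fix is the one the paper uses: stay in $L^2$ throughout. Since $D^N\hat q_1\in L^2(\mR^2_\xi\times\mR^2_\eta)$ for every $N$ --- the bound $(1+|\xi|^2)^{-1}(1+|\eta|^2)^{-1}$ \emph{is} square integrable in four dimensions --- Plancherel gives $(1+|y|^N+|z|^N)q_1\in L^2_{yz}$. Writing
\[
(1+|y|+|z|)^l q_1=(1+|y|+|z|)^{l-N}\cdot(1+|y|+|z|)^N q_1
\]
with $N>l+2$, so that the first factor is in $L^2(\mR^2_y\times\mR^2_z)$, and applying Cauchy--Schwarz yields the $L^1$ conclusion without ever needing a pointwise bound on $q_1$.
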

\begin{proof}By (\ref{growth:q}), we have a growth condition for $q_1$,
\[|\hat{q}_1(\xi,\eta)|={|\hat{q}(\xi,\eta)|\over(1+|\xi|^2)^3(1+|\eta|^2)^3}
\le C(1+|\xi|^2)^{-1}(1+|\eta|^2)^{-1}\] and inductively,
\[|D^{N}\hat{q}_1(\xi,\eta)|\le C(1+|\xi|^2)^{-1}(1+|\eta|^2)^{-1}\in
L^2(\mR^2_\xi\times\mR^2_\eta)\] for any integer $N\ge0$. Therefore, by the Plancherel Theorem
$(1+|y|^N+|z|^N)q_1(y,z)\in L^2_{yz}$, which readily implies
\[
(1+|y|+|z|)^l q_1(y,z)=(1+|y|+|z|)^{l-N}\cdot
(1+|y|+|z|)^Nq_1(y,z)\in L^1(\mR^2_y\times\mR^2_z).
\]
\end{proof}

Note that (\ref{Estimate2prop}) is a direct consequence of
(\ref{Estimate1prop}), it suffices to prove (\ref{Estimate1prop}).
We then show the following estimate that serves as a slightly
stronger version of Proposition \ref{prop:est}: for any kernel
$q(y,z)$ satisfying the growth condition (\ref{growth:q}) and
$f(t,x)$, $g(t,x)$ with sufficient regularity, there exists a
constant $C$ independent of $f,g$ such that
\be\label{prop:proof}\left\|(1+t+|x|)\Gb[f,q,g]\right\|_{L^p_x}\le
C\sum_{i+j=|\beta|}\min\left\{\LGNN f\RGNN{i+\gamma}\LGN
g\RGN{j+6,-1},\,\LGN f\RGN{i+6,-1}\LGNN g\RGNN{j+\gamma}\right\}\ee
where $\gamma=6$ if $p=2$ or $\gamma=8$ if $p=\infty$.

We prove it by induction.

\emph{Step 1.} Set $|\beta|=0$. By integrating by parts, the LHS of
(\ref{prop:proof})
\be\label{step0:1}\left\|(1+t+|x|)[f,q,g]\right\|_{L^p_x}
=\left\|(1+t+|x|)[f_1,q_1,g_1]\right\|_{L^p_x}\ee where $q_1$ is defined in (\ref{def:q1}) and
\[\begin{split} f_1(t,y):=&(1-\Delta_y)^3f(t,y),\qquad
g_1(t,z):=(1-\Delta_z)^3g(t,z)\end{split}.\]
Continue from (\ref{step0:1}),
\[\begin{split}
&\left\|(1+t+|x|)[f,q,g]\right\|_{L^p_x}\\
=
&\left\|(1+t+|x|)\int_{\mR^2\times\mR^2}f_1(t,x-y)
q_1(y,z)g_1(t,x-z)\,dydz\right\|_{L^p_x}\\
\le&\left\|\int_{\mR^2\times\mR^2}\left|(1+t+|x-y|)
f_1(t,x-y)q_1(y,z)g_1(t,x-z)\right|\,dydz\right\|_{L^p_x}\\
&+\left\|\int_{\mR^2\times\mR^2}\left|f_1(t,x-y)yq_1(y,z)
g_1(t,x-z)\right|\,dydz\right\|_{L^p_x}\\
\le&\left|(1+t+|y|)f_1(t,y)\right|_{L^\infty_y}\|q_1(y,z)\|_{L^1_{yz}}
\|g_1(t,z)\|_{L^p_z}\\&+\left|f_1(t,y)\right|_{L^\infty_y}
\|yq_1(y,z)\|_{L^1_{yz}}\|g_1(t,z)\|_{L^p_z}
\end{split}\]
by Young's inequality. Combined with Proposition \ref{prop:kernel}
(and Sobolev inequality if $p=\infty$), this
implies\[\left\|(1+t+|x|)[f,q,g]\right\|_{L^p_x}\le C\LGN
f_1\RGN{0,-1}\|g_1\|_{L^p}\le C\LGN f\RGN{6,-1}\LGNN
g\RGNN{\gamma}.\]The same estimate holds if we switch $f$ and $g$.
Thus, we proved (\ref{prop:proof}) for $|\beta|=0$.

\emph{Step $2$.} Suppose (\ref{prop:proof}) is true for all $(n-1)$-th order
vector fields. Now pick any $n$-th order vector field
$\Gb:=\Gamma^{\beta'}\Gamma^1$ where $|\beta'|=n-1$ and
$\Gamma^1\in\{\pa_t,\,\pa_1,\,\pa_2,\,t\pa_1+x_1\pa_t,\,t\pa_2+x_2\pa_t,
\,x_1\pa_2-x_2\pa_1\}$. By product rule and the definition of normal
forms (\ref{def:vi}), for any $\pa\in\{\pa_t,\pa_1,\pa_2\}$,
we have
\[
\begin{split}\pa[f,q,g]=&[\pa f,q,g]+[f,q,\pa g],\\
t\pa[f,q,g]=&[t\pa f,q,g]+[f,q,t\pa g],\\
x_i\pa[f,q,g]=&\left([x_i\pa f(t,x),q(y,z),g(t,x)]
+[\pa f(t,x),y_iq(y,z),g(t,x)]\right)\\
&+\left([f(t,x),q(y,z),x_i\pa g(t,x)]+[f(t,x),z_iq(y,z),\pa g(t,x)]\right),
\end{split}
\]
which immediately implies that
\be\label{productrule}
\begin{split}\Gb[f,q,g]=&\Gamma^{\beta'}\left([\Gamma^1 f,q,g]
+[f,q,\Gamma^1g]\right)\\ &+\Gamma^{\beta'}\sum_{i=0}^2
\sum_{j=1}^2C_{ij}\left([\pa_if,y_jq,g]+[f,z_jq,\pa_ig]\right).
\end{split}
\ee Here, the kernels are $q(y,z)$, $y_jq(y,z)$, $z_jq(y,z)$, all
satisfying the growth condition (\ref{growth:q}). Therefore, the
inductive hypothesis is true and we apply (\ref{prop:proof}) with
$|\beta'|=n-1$ on (\ref{productrule}) to conclude that
(\ref{prop:proof}) also holds for $|\beta|=n$. This finishes the proof.

\providecommand{\bysame}{\leavevmode\hbox to3em{\hrulefill}\thinspace}
\providecommand{\MR}{\relax\ifhmode\unskip\space\fi MR }
\providecommand{\MRhref}[2]{%
  \href{http://www.ams.org/mathscinet-getitem?mr=#1}{#2}
}
\providecommand{\href}[2]{#2}

\end{document}